\newcommand{\C}{\mathbb{C}}
\newcommand{\R}{\mathbb{R}}
\newcommand{\Z}{\mathbb{Z}}
\newcommand{\N}{\mathbb{N}}
\newcommand{\Q}{\mathbb{Q}}
\newcommand{\ud}{\mathrm{d}}
\newcommand{\cC}{\mathcal{C}}
\newcommand{\cA}{\mathcal{A}}
\newcommand{\cB}{\mathcal{B}}
\newcommand{\fo}{\mathfrak{o}}
\newcommand{\fm}{\mathfrak{m}}
\newcommand{\bx}{\mathbf{x}}
\newcommand{\by}{\mathbf{y}}
\newcommand{\bxi}{\bm{\xi}}
\newcommand{\rad}{\mathrm{rad}\,}
\theoremstyle{plain}
\newtheorem{theorem}{Theorem}[section]
\newtheorem{lemma}[theorem]{Lemma}
\newtheorem{proposition}[theorem]{Proposition}
\newtheorem{corollary}[theorem]{Corollary}
\theoremstyle{definition}
\newtheorem{definition}[theorem]{Definition}
\newtheorem{remark}[theorem]{Remark}
\title[Non-archimedean Littlewood--Paley theory for curves]{A non-archimedean variant of Littlewood--Paley theory for curves}
\author[J. Hickman]{ Jonathan Hickman }
\address{School of Mathematics and Maxwell Institute for Mathematical Sciences, University of Edinburgh, James Clerk Maxwell Building, Kings Buildings, Peter Guthrie Tait Road, Edinburgh, EH9 3FD, UK.}
\email{jonathan.hickman@ed.ac.uk}
\author[J. Wright]{ James Wright }
\address{School of Mathematics and Maxwell Institute for Mathematical Sciences, University of Edinburgh, James Clerk Maxwell Building, Kings Buildings, Peter Guthrie Tait Road, Edinburgh, EH9 3FD, UK.}
\email{j.r.wright@ed.ac.uk}
\date{}
\begin{document}

\maketitle

\begin{abstract}
    We prove a variant of a square function estimate for the extension operator associated to the moment curve
    in non-archimedean local fields. The arguments rely on a structural analysis of congruences (sublevel sets) of univariate polynomials over field extensions of the base field. Our analysis can be adapted to the archimedean setting as well.
\end{abstract}



\section{Introduction}




\subsection{Statement of the results} This paper concerns the Fourier restriction theory for curves and associated Littlewood--Paley-type inequalities. Classically, this theory forms part of \textit{Euclidean} harmonic analysis, however here we explore these questions in the setting of a general locally compact topological field $K$ with a nontrivial topology. Such fields carry a natural absolute value $|\,\cdot\,|_K$ and a Haar measure $\mu$. They are classified as archimedean local fields (when $K = {\mathbb R}$ is the real field or when $K = {\mathbb C}$ is the complex field) or non-archimedean local fields such as the $p$-adic field ${\mathbb Q}_p$. The Littlewood--Paley theory for curves is well known when $K = {\mathbb R}$ is the real field so we will state and prove our results for non-archimedean local fields. In an appendix we will show how to adapt our arguments to work in the archimedean setting.  

Let $(K, |\,\cdot\,|_K)$ be a non-archimedean local field with ring of integers $\fo_K$, residue class field $k_K$, uniformiser $\pi_K$ and $q_K := |\pi|_K^{-1}$. For the reader's convenience, we will review some of the basic concepts of analysis over local fields in  \S\ref{sec: background} below. Fix an additive character $e \colon K \to \C$ such that $e$ restricts to the constant function 1 on $\fo_K$ and to a non-principal character on $\pi_K^{-1}\fo_K$. For $n \geq 2$, we define the \textit{extension operator} associated to the moment curve by
\begin{equation}\label{eq: padic ex op}
    Ef(\bx) := \int_{\fo_K} e(x_1t + x_2t^2 + \cdots + x_n t^n) f(t)\,\ud \mu(t) \qquad \textrm{for all $f \in L^1(K)$ and $\bx \in K^n$.}
\end{equation}
Here and below, integration is taken with respect to the Haar measure $\mu$ on $K$, which is normalised so that $\mu(\fo_K) = 1$. 

The operator $E$ is a fundamental object of study in the \textit{Fourier restriction theory} over local fields $K$. This theory was investigated systematically by the authors in \cite{HW2018}, with a focus on the problem of determining Lebesgue space mapping properties. Here we are interested in \textit{Littlewood--Paley} or \textit{square function} inequalities for the operator \eqref{eq: padic ex op}. To describe the setup, fix $\alpha \in \N$ and let $\mathcal{I}(q_K^{-\alpha})$ denote the collection of $q_K^{\alpha}$ distinct balls of the form
\begin{equation*}
    B_K(x; q_K^{-\alpha}) := \{ t \in \fo_K : |t-x|_K \leq q_K^{-\alpha} \}, \qquad x \in \fo_K.
\end{equation*}
Thus, $\mathcal{I}(q_K^{-\alpha})$ defines a decomposition of $\fo_K$, which induces a decomposition of the extension operator
\begin{equation}\label{eq: ex dec}
    Ef = \sum_{I \in \mathcal{I}(q_K^{-\alpha})} E_If \qquad \textrm{where $E_If := E\big(f \chi_I\big)$ for all $I \in \mathcal{I}(q_K^{-\alpha})$.}
\end{equation}
Here $\chi_I$ denotes the characteristic function of $I \in \mathcal{I}(q_K^{-\alpha})$.

\begin{theorem}\label{thm: sf} Let $(K, |\,\cdot\,|_K)$ be a non-archimedean local field and $\mathrm{char}\,k_K > n \geq 2$. For all $1 \leq m \leq n$ and all $\alpha \in \N$, the identity
\begin{equation*}
    \|Ef\|_{L^{2m}(B(\bx, q_K^{\alpha n}))} \leq (m!)^{1/2m}  \Big\|\big( \sum_{I \in \mathcal{I}(q_K^{-\alpha})} |Ef_I|^2\big)^{1/2}\Big\|_{L^{2m}(B(\bx, q_K^{\alpha n}))}
\end{equation*}
holds for all $f \in L^1(\fo_K)$ and all $\bx \in K^n$.
\end{theorem}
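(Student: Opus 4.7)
The plan is to raise both sides to the $2m$-th power and exploit non-archimedean Fourier orthogonality. Setting $B := B(\bx, q_K^{\alpha n})$ and expanding via \eqref{eq: ex dec},
\begin{equation*}
\|Ef\|_{L^{2m}(B)}^{2m} = \sum_{\vec I, \vec J \in \mathcal I(q_K^{-\alpha})^m} \int_{B} \prod_{i=1}^m Ef_{I_i}(\by) \prod_{j=1}^m \overline{Ef_{J_j}(\by)} \,\ud\mu(\by).
\end{equation*}
Using Fubini together with the standard orthogonality identity $\int_{B} e(\by \cdot \bxi) \,\ud\mu(\by) = q_K^{\alpha n^2} e(\bx \cdot \bxi)$ when $|\xi_k|_K \le q_K^{-\alpha n}$ for all $k = 1, \ldots, n$ (and vanishing otherwise), each $(\vec I, \vec J)$-summand reduces to $q_K^{\alpha n^2}$ times an integral of $\prod_i f(t_i)\,\overline{f(s_j)}$ supported on the sublevel set
\begin{equation*}
S(\vec I, \vec J) := \Big\{(\vec t, \vec s) \in \prod_i I_i \times \prod_j J_j : \Big|\sum_i t_i^k - \sum_j s_j^k\Big|_K \le q_K^{-\alpha n}\ \text{for all } k = 1, \ldots, n\Big\}.
\end{equation*}

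The heart of the argument is a structural claim that $S(\vec I, \vec J) = \emptyset$ unless $\{I_1, \ldots, I_m\}$ and $\{J_1, \ldots, J_m\}$ agree as multisets. My plan for this is to first apply Newton's identities — valid thanks to $\mathrm{char}\,k_K > n \ge m$, which makes every integer $1 \le k \le m$ a unit in $\fo_K$ — to transfer the power-sum constraints into the statement that $p(X) := \prod_i(X - t_i)$ and $q(X) := \prod_j(X - s_j)$ satisfy $p \equiv q \pmod{\pi_K^{\alpha n}}$ coefficient-wise. Parametrising $t_i = a_{I_i} + \pi_K^\alpha u_i$ and $s_j = a_{J_j} + \pi_K^\alpha v_j$ with $u_i, v_j \in \fo_K$ and $a_B$ the centre of ball $B$, expanding $p - q$ in the $\pi_K^\alpha$-adic parameters produces at each order a Vandermonde-type linear system in the multiplicity-difference vector $c_B := \mu_B(\vec I) - \mu_B(\vec J)$. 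The pairwise distinctness of centres ($|a_B - a_{B'}|_K \ge q_K^{-\alpha+1}$) controls the Vandermonde inverse, and combined with the bound $|c_B|_{\mathrm{Arch}} \le m < \mathrm{char}\,k_K$ one may promote ``$c_B \equiv 0 \pmod{\pi_K}$'' to ``$c_B = 0$'' in $\Z$. The main obstacle lies precisely here: the naive leading-order Vandermonde bound becomes too weak once $\alpha$ and the number of distinct balls grow, and one must exploit the full $\pi_K^\alpha$-adic structure of the congruence $p \equiv q$ — which is presumably where the paper's promised ``structural analysis of congruences of univariate polynomials over field extensions'' enters.

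Once multiset equality is in hand, the combinatorial bookkeeping is immediate. If $\vec J = (I_{\pi(1)}, \ldots, I_{\pi(m)})$ for some $\pi \in S_m$, then relabelling gives $\prod_j \overline{Ef_{J_j}} = \prod_i \overline{Ef_{I_i}}$, so the $(\vec I, \vec J)$-summand collapses to the real nonnegative quantity $\int_B \prod_i |Ef_{I_i}|^2 \,\ud\mu$. Since each tuple $\vec I$ admits at most $m!$ rearrangements,
\begin{equation*}
\|Ef\|_{L^{2m}(B)}^{2m} \le m! \sum_{\vec I} \int_B \prod_i |Ef_{I_i}|^2 \,\ud\mu = m! \int_B \Big(\sum_I |Ef_I|^2\Big)^m \ud\mu,
\end{equation*}
and extracting the $2m$-th root yields the constant $(m!)^{1/2m}$ of the theorem.
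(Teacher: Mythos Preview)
Your C\'ordoba--Fefferman reduction is correct and matches the paper's \S\ref{sec: Cordoba Fefferman}: expand, use the orthogonality of the character on $B$, reduce to showing $S(\vec I,\vec J)=\emptyset$ unless the multisets $\{I_i\}$ and $\{J_j\}$ coincide, and then do the $m!$ bookkeeping. That part is fine and is exactly how the paper argues.

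The gap is precisely where you flag it: the structural claim itself, which is Proposition~\ref{prop: main}. Your Vandermonde idea does not close (as you concede), and the paper's route is entirely different from a linear-algebraic expansion. After the Newton-identity step (your reduction to $p\equiv q$ coefficientwise mod $\pi_K^{\alpha n}$ is the paper's Step~1), the paper passes to a degree-$n!$ totally ramified extension $K_\circ/K$ and observes that the sublevel sets $\{z\in\fo_{K_\circ}:|P_{\bx}(z)|_{K_\circ}\le N^{-n}\}$ and $\{z:|P_{\by}(z)|_{K_\circ}\le N^{-n}\}$ are \emph{equal}. It then invokes the Phong--Stein--Sturm decomposition (Lemma~\ref{lem: PSS}) to write each set as a union of balls centred at the roots with explicit radii $r_j$. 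A coarse pass groups roots into ``superclusters'' at scale $N^{-1}$; a fine pass, using an ultrametric Vitali disjointification together with the self-referential radius formula of Lemma~\ref{self-ref lem}, yields a product identity among the radii which, after taking products and exploiting a parity cancellation, forces matching supercluster cardinalities and hence multiset equality. The degree-$n!$ extension is there solely so that the radii $r_j$ actually lie in the value group and balls can be matched by their radii. None of this is recoverable from a Vandermonde expansion in the $\pi_K^\alpha$-adic parameters; the argument is metric/geometric rather than linear-algebraic.
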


Throughout the paper, $L^p$ norms are taken with respect to the Haar measure on $K^n$ given by the $n$-fold product of $\mu$ above. The balls $B(\bx, q_K^{\alpha n})$ are defined with respect to the $\ell^{\infty}$ norm induced by $|\,\cdot\,|_K$: see \S\ref{sec: background} for further details. 

Theorem~\ref{thm: sf} is an analogue of a Euclidean result from \cite{Prestini1984a, Prestini1984b, GGPRY2021}, as described below in \S\ref{subsec: Euclid}. Moreover, recently square function inequalities of this type were investigated in the local field setting in \cite{BBH} in the case $n=2$ for general polynomial curves.\footnote{The methods of \cite{BBH} imply bounds for $n \geq 2$ but only at the level of $m = 2$.} 

By a well-known \textit{$2n$-orthogonality argument} due to C\'ordoba and Fefferman, the proof of Theorem~\ref{thm: sf} reduces to establishing the following number-theoretic proposition. 

\begin{proposition}\label{prop: main} Let $(K, |\,\cdot\,|_K)$ be a non-archimedean local field, $\mathrm{char}\,k_K > n \geq 2$ and $a \in \N$. Suppose $(x_1, \dots, x_n)$, $(y_1, \dots,  y_n) \in (\fo_K)^n$ satisfy
\begin{equation}\label{system: powers}
   |x_1^j + \cdots + x_n^j - y_1^j - \cdots - y_n^j|_K \leq q_K^{-n a} \qquad \textrm{for $1 \leq j \leq n$.}
\end{equation}
Then there exists a permutation $\sigma$ on $\{1, \cdots, n\}$ such that $|x_j - y_{\sigma(j)}|_K \leq q_K^{-a}$ for all $1 \leq j \leq n$.
\end{proposition}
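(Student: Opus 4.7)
My plan is to induct on $a$, proving the proposition for all admissible $n$ simultaneously at each step. The first move is to rewrite the hypothesis in terms of the coefficient polynomials $P_x(T) := \prod_{i=1}^n (T-x_i)$ and $P_y(T) := \prod_{i=1}^n (T-y_i)$. Because $\mathrm{char}\, k_K > n$, each factorial $j!$ with $1 \le j \le n$ is a unit in $\fo_K$, and Newton's identities express the elementary symmetric polynomials in $x_1, \dots, x_n$ (up to sign, the coefficients of $P_x$) as $\fo_K$-linear combinations of the power sums. Hence the hypothesis \eqref{system: powers} is equivalent to $|P_x - P_y|_K \le q_K^{-na}$ in the Gauss (coefficientwise supremum) norm on $\fo_K[T]$. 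The base case $a = 1$ is then immediate: reducing modulo $\pi_K$ yields $\bar P_x = \bar P_y$ in $k_K[T]$, and unique factorisation over the field $k_K$ matches the multisets of root residues $\{\bar x_i\} = \{\bar y_j\}$, giving a permutation $\sigma$ with $|x_i - y_{\sigma(i)}|_K \le q_K^{-1}$.

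For the inductive step, fix $a \ge 2$ and assume the proposition for all admissible $n$ at level $a-1$. Partition the indices by residue class: let $c_1, \dots, c_t \in k_K$ be the distinct residues of the $\bar x_i$ (equivalently of the $\bar y_j$, by the base-case argument), set $X_s := \{i : \bar x_i = c_s\}$ and $Y_s := \{j : \bar y_j = c_s\}$, $n_s := |X_s| = |Y_s|$, and fix lifts $c_s^* \in \fo_K$. The corresponding factorisations are $P_x = \prod_s Q_s^x$ and $P_y = \prod_s Q_s^y$ with $Q_s^x(T) := \prod_{i \in X_s}(T - x_i)$ monic of degree $n_s$ whose roots all reduce to $c_s$, and similarly for $Q_s^y$. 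Since the mod-$\pi_K$ reductions $(T - c_s)^{n_s}$ are pairwise coprime in $k_K[T]$, a refined Hensel-type estimate (discussed below) yields
\[
    |Q_s^x - Q_s^y|_K \le |P_x - P_y|_K \le q_K^{-na}.
\]
Writing $x_i = c_s^* + \pi_K x_i'$ for $i \in X_s$ and $y_j = c_s^* + \pi_K y_j'$ for $j \in Y_s$, the substitution $T = c_s^* + \pi_K S$ gives $Q_s^x(c_s^* + \pi_K S) = \pi_K^{n_s} P_{X_s'}(S)$, where $P_{X_s'}(S) := \prod_{i \in X_s}(S - x_i')$ (analogously for $Y$). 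A direct coefficient-by-coefficient bound for the affine substitution, combined with the $\pi_K^{-n_s}$ rescaling, delivers
\[
    |P_{X_s'} - P_{Y_s'}|_K \le q_K^{n_s - na} \le q_K^{-n_s(a-1)},
\]
the last inequality using $n \ge n_s$. Newton's identities (still available since $\mathrm{char}\, k_K > n \ge n_s$) now translate this into the hypothesis of the proposition for $(x_i')_{i \in X_s}$ and $(y_j')_{j \in Y_s}$ at level $(n_s, a-1)$. The inductive hypothesis produces a bijection $\sigma_s : X_s \to Y_s$ with $|x_i' - y_{\sigma_s(i)}'|_K \le q_K^{-(a-1)}$, and hence $|x_i - y_{\sigma_s(i)}|_K \le q_K^{-a}$; assembling the $\sigma_s$ yields the desired permutation $\sigma$.

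The step I expect to be the main obstacle is the refined Hensel-type bound $|Q_s^x - Q_s^y|_K \le |P_x - P_y|_K$. My plan is to argue directly as follows. With $\alpha := Q_s^x - Q_s^y$ (of degree $< n_s$) and $\beta := \prod_{s' \ne s} Q_{s'}^x - \prod_{s' \ne s} Q_{s'}^y$ (of degree $< n - n_s$), the telescoping identity
\[
    \alpha \cdot \prod_{s' \ne s} Q_{s'}^y \;+\; Q_s^x \cdot \beta \;=\; P_x - P_y
\]
reduces modulo $\pi_K$ to $\bar\alpha \cdot \prod_{s' \ne s}(T - c_{s'})^{n_{s'}} + (T-c_s)^{n_s} \cdot \bar\beta = 0$ in $k_K[T]$. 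Pairwise coprimality combined with the degree bounds $\deg \bar\alpha < n_s$ and $\deg \bar\beta < n - n_s$ forces $\bar\alpha = \bar\beta = 0$, so both lie in $\pi_K \fo_K[T]$. Applying the same reasoning to $(\alpha/\pi_K, \beta/\pi_K)$ against $(P_x - P_y)/\pi_K$ gains one additional factor of $\pi_K$, and iterating $na$ times produces the claimed bound.
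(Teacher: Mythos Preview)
Your argument is correct and genuinely different from the paper's. Both proofs begin the same way, passing via the Girard--Newton identities from power-sum congruences to the coefficient bound $|P_{\bx}-P_{\by}|_K\le q_K^{-na}$ in the Gauss norm. From there they diverge completely. The paper interprets this as an equality of sublevel sets $\{z:|P_{\bx}(z)|\le q_K^{-na}\}=\{z:|P_{\by}(z)|\le q_K^{-na}\}$, passes to a degree-$n!$ totally ramified extension $K_\circ$ so that the Phong--Stein--Sturm radii lie in the value group, and then runs a delicate contradiction argument on supercluster sizes using the explicit self-referential formula for those radii. Your proof instead stays in $K$, inducts on $a$, and at the inductive step uses a quantitative uniqueness-of-Hensel-factorisation estimate (the iterated coprimality argument you describe) to split the single congruence $P_{\bx}\equiv P_{\by}$ into residue-class pieces $Q_s^x\equiv Q_s^y$; the affine substitution $T=c_s^*+\pi_K S$ then drops the level from $a$ to $a-1$. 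The only minor omission is the case $n_s=1$, where the proposition cannot be invoked (it assumes $n\ge2$), but there the bound $|Q_s^x-Q_s^y|_K\le q_K^{-na}$ already gives $|x_i-y_j|_K\le q_K^{-na}\le q_K^{-a}$ directly.

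What each approach buys: yours is shorter, entirely elementary, avoids field extensions and the Phong--Stein--Sturm machinery, and is arguably the natural argument from the viewpoint of $p$-adic factorisation. The paper's sublevel-set approach, while heavier, is designed to transfer to the archimedean case (see their Appendix), where no Hensel lifting or reduction mod $\pi_K$ is available; your induction-on-$a$ scheme relies essentially on the discrete valuation and would not survive that passage.
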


Proposition~\ref{prop: main} examines the structure of `almost solutions' to a Vinogradov-type system of equations. In particular, it can be roughly interpreted as saying that every `almost solution' to the system $x_1^j + \cdots + x_n^j = y_1^j + \cdots + y_n^j$ for $1 \leq j \leq n$ is `almost trivial'. A similar statement appears in the Euclidean context in \cite{GGPRY2021}, although the method of proof used in \cite{GGPRY2021} breaks down completely in the non-archimedean setting (see the discussion in \S\ref{subsec: proof} below). 




\subsection{Motivation: the Euclidean case}\label{subsec: Euclid} It is instructive to contrast Theorem~\ref{thm: sf} with counterpart results in the Euclidean setting. For $n \geq 2$ let $\gamma \colon [0,1] \to \R^n$ be a $C^n$ curve in $n$-dimensional Euclidean space which satisfies the non-degeneracy hypothesis $\det(\gamma'(t) \cdots \gamma^{(n)}(t)) \neq 0$ for all $t \in [0,1]$. Define the associated extension operator
\begin{equation*}
    Ef(\bx) := \int_0^1 e^{2 \pi i \bx \cdot \gamma(t)} f(t)\,\ud t \qquad \textrm{for all $f \in L^1([0,1])$ and $\bx \in \R^n$.}
\end{equation*}
Let $0 < \delta \leq 1$ be a dyadic number and $\mathcal{I}(\delta)$ be the decomposition of $[0,1]$ into closed dyadic intervals of length $\delta$. We decompose the extension operator as in \eqref{eq: ex dec}, with $q_K^{-\alpha}$ replaced by $\delta$. Under these hypotheses, it is known that for each $1 \leq m \leq n$ there exists a constant $C_m \geq 1$ such that
\begin{equation}\label{eq: euclid sf}
    \|Ef\|_{L^{2m}(B_{\delta^{-n}})} \leq C_m\Big\|\big( \sum_{I \in \mathcal{I}(\delta)} |Ef_I|^2\big)^{1/2}\Big\|_{L^{2m}(w_{B_{\delta^{-n}}})}
\end{equation}
holds for all $f \in L^1([0,1])$. Here $B_{\delta^{-n}}$ is a Euclidean ball of radius $\delta^{-n}$ and arbitrary centre, and $w_{B_{\delta^{-n}}}$ is a rapidly decaying weight function concentrated on $B_{\delta^{-n}}$; we refer to \cite{GGPRY2021} for the precise definitions. The inequality in the  $n=2$ case goes back to work of Fefferman \cite{Fefferman1973}. The general case is implicit in works of Prestini \cite{Prestini1984a, Prestini1984b}, albeit the arguments of these papers are somewhat lacking in detail. More recently, the inequality was rediscovered in \cite{GGPRY2021}, which includes a complete proof and contextualises the result in relation to recent developments in harmonic analysis and analytic number theory. It is remarked that a reverse form of \eqref{eq: euclid sf} holds as a simple consequence of a classical and elementary square function estimate due to Carleson (see, for instance, \cite{RdF1983}).

Interest in bounds such as \eqref{eq: euclid sf} has been spurred by the breakthrough work of Bourgain--Demeter--Guth \cite{BDG2016} which settled the long-standing main conjecture in Vinogradov's mean value theorem, a central problem in the theory of Diophantine equations. The approach in \cite{BDG2016} relied on establishing certain \textit{decoupling estimates} for the extension operator associated to the moment curve. These estimates are of a superficially similar form to the inequality in \eqref{eq: euclid sf}, although \eqref{eq: euclid sf} is much more elementary than the key estimate from \cite{BDG2016} and is not sufficient to prove the main conjecture. Nevertheless, in \cite{GGPRY2021} the authors discuss a general philosophy relating square function bounds to Diophantine equations. 




\subsection{Remarks on the proof}\label{subsec: proof} Recall that the key ingredient in the proof of Theorem~\ref{thm: sf} is Proposition~\ref{prop: main}. The latter is a natural non-archimedean analogue of Proposition 1.3 from \cite{GGPRY2021}. It is remarked that the arguments used in \cite{GGPRY2021} rely heavily on the order structure of the real line and break down completely in the non-archimedean setting. Consequently, to establish  Proposition~\ref{prop: main} we use a markedly different approach which is more algebraic in nature and involves the geometric analysis of sublevel sets, corresponding to a structural analysis of polynomial congruences. 

To describe the rudiments of our approach, we first consider the following reformulation of Proposition~\ref{prop: main} in the case where $K = \Q_p$ is the field of $p$-adic numbers. 

\begin{corollary}\label{cor: main ref} Let $n$, $a \in \N$ and $p$ be a rational prime such that $p > n \geq 2$. Suppose $(x_1, \dots, x_n)$, $(y_1, \dots,  y_n) \in \Z^n$ satisfy the congruence equations 
\begin{equation}\label{system: powers cong}
        x_1^j + \cdots + x_n^j \equiv y_1^j + \cdots + y_n^j   \mod p^{n a}  \qquad \textrm{for $1 \leq j \leq n$.}
\end{equation}
Then there exists a permutation $\sigma$ on $\{1, \cdots, n\}$ such that $x_j \equiv y_{\sigma(j)} \mod p^a$ for all $1 \leq j \leq n$.
\end{corollary}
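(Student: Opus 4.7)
The plan is to rephrase the power-sum hypothesis as a congruence of characteristic polynomials and then induct on $a$ by ``zooming in'' on each residue class modulo $p$.

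Newton's identities, valid because $p > n$, give $e_k(x_1, \dots, x_n) \equiv e_k(y_1, \dots, y_n) \pmod{p^{na}}$ for $1 \leq k \leq n$, so the monic polynomials $P(T) := \prod_{i=1}^n (T - x_i)$ and $Q(T) := \prod_{k=1}^n (T - y_k)$ satisfy $P \equiv Q \pmod{p^{na}}$ in $\Z[T]$. In particular, $\bar P = \bar Q$ in $\F_p[T]$; unique factorisation over the field $\F_p$ yields $\bar P = \bar Q = \prod_{\bar r} (T - \bar r)^{m_{\bar r}}$ for certain multiplicities $m_{\bar r}$. Grouping the $x_i$'s and $y_k$'s by residue modulo $p$ into index sets $I_{\bar r}, J_{\bar r}$ forces $|I_{\bar r}| = |J_{\bar r}| = m_{\bar r}$, which already settles the base case $a = 1$.

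For $a \geq 2$, set $P_{\bar r}(T) := \prod_{i \in I_{\bar r}}(T - x_i)$ and $Q_{\bar r}(T) := \prod_{k \in J_{\bar r}}(T - y_k)$; these are precisely the Hensel factors of $P$ and $Q$ associated with the pairwise coprime $\F_p$-decomposition above. The continuity (or uniqueness) form of Hensel's lemma propagates the congruence $P \equiv Q \pmod{p^{na}}$ to each factor individually: $P_{\bar r} \equiv Q_{\bar r} \pmod{p^{na}}$. Now fix an integer lift $r$ of $\bar r$ and substitute $T = r + pS$; since each root of $P_{\bar r}$ (resp. $Q_{\bar r}$) satisfies $x_i \equiv r \pmod p$, the rescaled polynomials $\tilde P_{\bar r}(S) := p^{-m_{\bar r}} P_{\bar r}(r + pS)$ and $\tilde Q_{\bar r}(S) := p^{-m_{\bar r}} Q_{\bar r}(r + pS)$ are monic of degree $m_{\bar r}$ in $\Z_p[S]$ with integer roots $\tilde x_i := (x_i - r)/p$ and $\tilde y_k := (y_k - r)/p$, and one has $\tilde P_{\bar r} \equiv \tilde Q_{\bar r} \pmod{p^{na - m_{\bar r}}}$.

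Applying Newton's identities in reverse to $\tilde P_{\bar r}, \tilde Q_{\bar r}$ (valid since $p > m_{\bar r}$) converts this into power-sum congruences for the $\tilde x_i, \tilde y_k$ at precision $p^{na - m_{\bar r}}$, which dominates $p^{m_{\bar r}(a-1)}$ since $m_{\bar r} \leq n$. The inductive hypothesis with parameters $(m_{\bar r}, a-1)$ then supplies a bijection $\sigma_{\bar r} \colon I_{\bar r} \to J_{\bar r}$ with $\tilde x_i \equiv \tilde y_{\sigma_{\bar r}(i)} \pmod{p^{a-1}}$, equivalently $x_i \equiv y_{\sigma_{\bar r}(i)} \pmod{p^a}$. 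The disjoint union of these local bijections is the desired global permutation $\sigma$. I expect the main obstacle to be the Hensel continuity step: showing that $P \equiv Q \pmod{p^{na}}$ descends to each factor with the same exponent $na$. This is standard but must be invoked (or verified directly, by induction on the number of coprime factors and a degree comparison) with explicit precision, as the exact exponent is what keeps the induction alive.
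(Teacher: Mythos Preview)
Your proof is correct, and the Hensel continuity step you flag does indeed hold with the full exponent $na$: if $P_1P_2\equiv Q_1Q_2\pmod{p^N}$ with all four monic, $\deg P_i=\deg Q_i$, $\bar P_i=\bar Q_i$, and $\gcd(\bar P_1,\bar P_2)=1$, then writing $P_i=Q_i+p^kR_i$ with $k$ maximal and $\deg R_i<\deg Q_i$ one gets $\bar Q_2\bar R_1+\bar Q_1\bar R_2=0$ in $\F_p[T]$, forcing $\bar R_i=0$ by coprimality and degree, so $k$ may be incremented until it reaches $N$. This keeps the induction alive exactly as you describe.

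Your route is genuinely different from the paper's. The paper deduces the corollary from its Proposition~\ref{prop: main}, whose proof does \emph{not} induct on $a$ but instead passes to a degree-$n!$ totally ramified extension of $K$ and applies the Phong--Stein--Sturm sublevel set decomposition (Lemma~\ref{lem: PSS}) together with a self-referential radius formula (Lemma~\ref{self-ref lem}); the equality $\alpha_m=\beta_m$ of supercluster sizes is then forced by a product identity over the radii, argued by contradiction. Your argument is considerably more elementary and transparent, exploiting directly the Henselian structure of $\Z_p$ (and it extends verbatim to any non-archimedean local field with $\mathrm{char}\,k_K>n$, giving an alternative proof of Proposition~\ref{prop: main}). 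What the paper's approach buys is portability to the archimedean case treated in Appendix~\ref{appendix}: over $\R$ or $\C$ there is no residue field and no Hensel lifting, so your induction has no foothold, whereas the sublevel-set machinery survives in an approximate form.
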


 Corollary~\ref{cor: main ref} is easily verified for $a =1$.\footnote{Moreover, when $a = 1$ one need only assume \eqref{system: powers cong} holds with $p^n$ replaced with $p$.} Indeed, this case holds as a consequence of the classical Girard--Newton formul\ae\, together with uniqueness of factorisation of polynomials over the field $\Z/p\Z$: see, for example, \cite{MT2004}. To prove the general case of Corollary~\ref{cor: main ref}, we will still make use of the Girard--Newton formul\ae. However, we must now consider polynomials over the rings $\Z/p^a\Z$ for $a \geq 2$ and therefore cannot rely on uniqueness of factorisation.

The key tool used to overcome these issues is a refined version of the Phong--Stein--Sturm sublevel set decomposition \cite{PSS1999}, formulated over non-archimedean local fields. It can be viewed as a refined structural description of polynomial congruences, extending work of Chalk \cite{Chalk1990} which is valid for large values of $a$ and work of Stewart \cite{Stewart1991} for polynomials with a nonzero discriminant. This decomposition has been applied previously by the second author to study complete exponential sums and congruence equations \cite{Wright2011, Wright2020} and is recalled in Lemma~\ref{lem: PSS} below. A slightly curious feature of the argument is that we apply the sublevel set decomposition over a high degree field extension of $K$ rather than $K$ itself. 




\subsection{Archimedean fields} Our arguments can be adapted to work in the archimedean setting. As a consequence, we obtain a new proof of the Euclidean estimate \eqref{eq: euclid sf} for the moment curve. Moreover, we are also able to prove an analogue of \eqref{eq: euclid sf} when $K = {\mathbb C}$ is the complex field. In this case, $E$ is the extension operator associated to a certain $2$-surface in ${\mathbb R}^{2n} \simeq {\mathbb C}^n$. For $n=2$ this complex estimate is contained in \cite{BBH}, but it appears to be new in higher dimensions. Adapting the proofs to the archimedean setting is not entirely straightforward, and a discussion of the necessary modifications is provided in Appendix~\ref{appendix}. 




\subsection*{Notation} Depending on the context, $|A|$ will either denote the absolute value of a complex number $A$ or the cardinality of a finite set $A$.




\section{Review of the basic concepts from the theory of local fields}\label{sec: background}




\subsection{Non-archimedean local fields} A \textit{valued field } $(K, |\,\cdot\,|_K)$ is a field $K$ together with an \textit{absolute value} map $|\,\cdot\,|_K \colon K \to [0,\infty)$ satisfying
\begin{enumerate}
    \item[i)] $|x|_K = 0$ if and only if $x = 0$;
    \item[ii)] $|xy|_K = |x|_K|y|_K$ for all $x$, $y \in K$;
    \item[iii)] $|x + y|_K \leq |x|_K + |y|_K$  for all $x$, $y \in K$.
\end{enumerate} 
The absolute value $|\,\cdot\,|_K$ is \textit{non-archimedean} if iii) can be strengthened to 
\begin{enumerate}
    \item[iii$^\prime$)] $|x + y|_K \leq \max\{|x|_K, |y|_K\}$ for all $x, y \in K$,
\end{enumerate}
otherwise it is \textit{archimedean}. Note that any field $K$ admits a \textit{trivial absolute value} is given by $|x|_K = 1$ for all $x \in K^*$ (the group of units) and $|0|_K = 0$. 

A valued field $(K, |\,\cdot\,|_K)$ is endowed with a metric $d_K$ by setting $d_K(x,y) := |x-y|_K$ for all $x$, $y \in K$. For a non-archimedean absolute value $d$ is an ultrametric, satisfying the ultrametric triangle inequality $d_K(x,z) \leq \max \{d_K(x,y), d_K(y,z)\}$ for all $x$, $y$, $z \in K$. The \textit{ball centred at $x \in K$ of radius $r > 0$} is defined by 
\begin{equation*}
    B_K(x,r) := \{y \in K : |y - x|_K \leq r\}.
\end{equation*}

Henceforth let $(K, |\,\cdot\,|_K)$ be a valued field where $|\,\cdot\,|_K$ is a non-trivial, non-archimedean absolute. The \textit{ring of integers of $K$} is defined as 
\begin{equation*}
    \fo_K := \{x \in K : |x|_K \leq 1\};
\end{equation*} it is easy to see $\fo_K$ is a local ring with unique maximal ideal 
\begin{equation*}
    \fm_K := \{x \in K : |x|_K < 1\}.
\end{equation*} 
The \textit{residue class field} of $K$ is defined to be the quotient $k_K := \fo_K / \fm_K$. Finally, the \textit{value group} $\Gamma_K := \{|x|_K \in (0, \infty) : x \in K^*\}$ is the multiplicative subgroup of $(0,\infty)$ formed by the image of $K^*$ under $|\,\cdot\,|_K$. 

The absolute value $|\,\cdot\,|_K$ is \textit{discrete} if the group $\Gamma_K$ is discrete. This holds if and only if the maximal ideal $\fm_K$ is principal. In this case, we let $\pi_K \in \fm_K$ denote some choice of generator, which is referred to as a \textit{uniformiser} for $K$. It follows that $\Gamma_K = \{ q_K^{-\nu} : \nu \in \Z\}$ where $q_K := |\pi_K|_K^{-1} \in (1, \infty)$. 

\begin{definition} A valued field $(K, |\,\cdot\,|_K)$  is a \textit{non-archimedean local field} if $|\,\cdot\,|_K$ is a non-trivial discrete non-archimedean absolute value, it is complete and the residue class field $k_K$ is finite.
\end{definition}

If $(K, |\,\cdot\,|_K)$ is a non-archimedean local field, then $\fo_K$ is a compact subset of $K$ and, consequently, $K$ is a locally compact metric space. If we fix $\pi_K$ a uniformiser for $K$ and $\cA \subseteq \fo_K$ a set of representatives for $k_K$, then every $x \in K^*$ can be written uniquely as $x = \sum_{m = M}^{\infty} x_m \pi_K^m$ for some sequence $(x_m)_{m = M}^{\infty}$ of elements from $\mathcal{A}$. Here the series is understood to converge with respect to the metric $d$ introduced above. It follows that each ball $B_K(x, q_K^{-\nu})$, where $x \in K$ and $\nu \in \Z$, is the union of precisely $|k_K|$ balls of radius $q_K^{-\nu - 1}$. For further details, see \cite[Chapter 4]{Cassels}.




\subsection{Field extensions} 

Suppose $(K, |\,\cdot\,|_K)$ is a non-archimedean local field and $L : K$ is a finite extension of $K$ of degree $d \in \N$. Then there exists a unique extension $|\,\cdot\,|_L$ of $|\,\cdot\,|_K$ to $L$. Furthermore, $(L, |\,\cdot\,|_L)$ is also a non-archimedean local field. We say the extension $L : K$ is \textit{totally ramified} if the residue class fields $k_K$ and $k_L$ are isomorphic. In this case, if $\pi_K$ and $\pi_L$ are uniformisers of $K$ and $L$, respectively, then $|\pi_K|_K = |\pi_L|_L^d$. Thus, $\Gamma_L = \{q_K^{-\nu/d} : \nu \in \Z\}$ where $q_K := |\pi_K|_K^{-1}$. For further details, see \cite[Chapter 7]{Cassels}.

To construct a totally ramified extension of $(K, |\,\cdot\,|_K)$ of an arbitrary degree $d \in \N$, consider the polynomial $f \in K[X]$ given by $f(X) := X^d - \pi_K$. By Eisenstein's criterion (see \cite[Theorem 2.1]{Cassels}), $f$ is irreducible over $K$. Thus, if $\zeta$ a root of $f$, lying in the algebraic closure of $K$, then the simple extension $K(\zeta)$ has degree $d$ and is totally ramified by \cite[Theorem 7.1]{Cassels}.




\subsection{Vector spaces} Given a valued field $(K, |\,\cdot\,|_K)$ and $n \in \N$, the $n$-dimensional vector space $K^n$ over $K$ is endowed with the norm
\begin{equation*}
    |\bx|_K := \max\{|x_1|_K, \dots, |x_n|_K\} \qquad \textrm{for all $\bx = (x_1, \dots, x_n) \in K^n$.}
\end{equation*}
The \textit{ball centred at $\bx \in K^n$ of radius $r > 0$} is then defined by 
\begin{equation*}
    B(\bx,r) := \{\by \in K^n : |\by - \bx|_K \leq r\}.
\end{equation*}




\subsection{Fourier analysis on non-archimedean local fields} By the above discussion, any non-archimedean local field $(K, |\,\cdot\,|_K)$ is a LCA group and therefore admits an additive Haar measure $\mu$. By appropriately normalising, one may assume $\mu(\fo_K) = 1$.

Let $\widehat{K}$ denote the Pontryagin dual of $K$. There exists a character $e \in \widehat{K}$ with the property that the restriction of $e$ to $\fo_K$ is a principal character on the additive subgroup $\fo_K$ whilst the restriction of $e$ to $\pi_K^{-1}\fo_K$ is non-principal on the additive subgroup $\pi_K^{-1}\fo_K$. We will apply Fourier analysis over the vector spaces $K^n$, which are endowed with the Haar measure given by the $n$-fold product of $\mu$, also denoted by $\mu$. Given any $\bxi \in K^n$, if one defines $e_{\bxi} \colon K^n \to \C$ by $e_{\bxi}(\bx) := e(\bx \cdot \bxi)$ for $\bx \in K^n$ where $\bx \cdot \bxi := x_1\xi_1 + \cdots + x_n \xi_n$, then $\bxi \mapsto e_{\bxi}$ is an isomorphism between $K^n$ and $\widehat{K}^n$. For further details see \cite[Chapter 1, \S8]{Taibleson}.

Let $\nu$ be a Borel measure on $K^n$. By duality, we may also consider this as a measure on $\widehat{K}^n$ (in particular, this applies to the Haar measure). If $\nu$ is a finite measure, we may define the Fourier transform and inverse Fourier transform of $\nu$ by
\begin{equation*}
    \widehat{\nu}(\bxi) := \int_{K^n} \overline{e(\bx \cdot \bxi)}\,\ud \nu(\bx) \quad \textrm{and} \quad \widecheck{\nu}(\bx) := \int_{\widehat{K}^n} e(\bx \cdot \bxi)\,\ud \nu(\bxi)
\end{equation*}
With this definition, the rudiments of Fourier analysis such as the inversion formula, Parseval's theorem and Plancherel's theorem hold over $K^n$. For further details see \cite[Chapters 2-3]{Taibleson}.




\section{The proof of Proposition~\ref{prop: main}}\label{sec: proof}




\subsection{A structural lemma for sublevel sets} Central to the proof of Proposition~\ref{prop: main} is a non-archimedean structural decomposition for sublevel sets of univariate real polynomials due to Phong--Stein--Sturm~\cite{PSS1999}. Here we work in the abstract setting of a non-archimedean local field $(K, | \,\cdot\, |_K)$. The Phong--Stein--Sturm decomposition from \cite{PSS1999} was extended to such fields in \cite{Wright2011}, and we state this version below in Lemma~\ref{lem: PSS}. 

To introduce the key lemma, suppose $\bxi = (\xi_1, . . . , \xi_n) \in (\fo_K)^n$ is an $n$-tuple of roots in the ring of integers $\fo_K$ of $K$ and define the monic polynomial $P_{\bxi} \in K[X]$ by
\begin{equation}\label{eq: poly}
    P_{\bxi}(X) = \prod_{j=1}^n (X - \xi_j).
\end{equation}
Given $0 < \varepsilon \leq 1$, we are interested in analysing the structure of the sublevel sets
\begin{equation*}
    \{ z \in \fo_K : |P_{\bxi}(z)|_K \leq \varepsilon\}.
\end{equation*}
Naturally, this depends on the distribution of the roots $\xi_j$ and, to understand this, we consider \textit{root clusters} $\mathcal{C}$, which are simply defined to be subsets of $\{\xi_1, \dots, \xi_n\}$. 

\begin{lemma}[\cite{Wright2011}]\label{lem: PSS}
Suppose $(K,|\,\cdot\,|_K)$ is a non-archimedean local field and $\bxi = (\xi_1, . . . , \xi_n) \in (\fo_K)^n$ is an $n$-tuple of roots. For all $0 < \varepsilon \leq 1$ we have
\begin{equation*}
    \{ z \in \fo_K : |P_{\bxi}(z)|_K \leq \varepsilon\} =  \bigcup_{j=1}^n B_K(\xi_j;r_j(\bxi, \varepsilon))
\end{equation*}
where
\begin{equation}\label{eq: PSS radii}
    r_j(\bxi;\varepsilon) := \min_{\cC \ni \xi_j} \Big( \frac{\varepsilon}{\prod_{\xi_i \notin \cC} |\xi_j - \xi_i|_K} \Big)^{1/|\cC|}.
\end{equation}
Here the minimum is taken over all root clusters $\cC$ containing $\xi_j$.
\end{lemma}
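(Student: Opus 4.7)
The plan is to prove both set inclusions by combining the ultrametric triangle inequality with a structural description of a cluster $\cC_\ast$ attaining the minimum in \eqref{eq: PSS radii}. The strategy is purely algebraic: estimate each factor $|z - \xi_i|_K$ of $P_{\bxi}(z)$ according to whether $\xi_i$ lies in the ``nearby'' cluster $\cC_\ast$ or in its complement.

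For the $\supseteq$ inclusion, fix $j$ and $z \in B_K(\xi_j; r_j(\bxi, \varepsilon))$, and let $\cC_\ast$ be a cluster attaining the minimum in \eqref{eq: PSS radii}. The key preliminary step is to check that the minimizer has the clean description $|\xi_j - \xi_i|_K \leq r_j$ for $\xi_i \in \cC_\ast$ and $|\xi_j - \xi_i|_K \geq r_j$ for $\xi_i \notin \cC_\ast$. Both facts follow by a short computation: removing an offending $\xi_i$ from $\cC_\ast$ in the first case, or adjoining it in the second, would strictly decrease the right-hand side of \eqref{eq: PSS radii}, contradicting minimality. The ultrametric inequality then produces $|z - \xi_i|_K \leq r_j$ for $\xi_i \in \cC_\ast$ and $|z - \xi_i|_K \leq |\xi_j - \xi_i|_K$ for $\xi_i \notin \cC_\ast$; multiplying these bounds and invoking the defining relation for $r_j$ gives $|P_{\bxi}(z)|_K \leq r_j^{|\cC_\ast|} \prod_{\xi_i \notin \cC_\ast} |\xi_j - \xi_i|_K = \varepsilon$. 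This structural analysis of the minimizer is the step I expect to present the main subtlety; once in place, the rest is direct bookkeeping.

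For the reverse inclusion, fix $z \in \fo_K$ with $|P_{\bxi}(z)|_K \leq \varepsilon$, take $\xi_{j_0}$ to be a root minimizing $|z - \xi_i|_K$, and set $\rho := |z - \xi_{j_0}|_K$. A pleasant feature here is that no cluster needs to be pre-selected: the plan is to produce a lower bound for $|P_{\bxi}(z)|_K$ valid for every cluster $\cC \ni \xi_{j_0}$. Indeed, $|z - \xi_i|_K \geq \rho$ for $\xi_i \in \cC$ by the defining property of $\rho$, while the ultrametric inequality applied to $\xi_{j_0}$, $z$, $\xi_i$ (using $|z - \xi_{j_0}|_K = \rho \leq |z - \xi_i|_K$) gives $|z - \xi_i|_K \geq |\xi_{j_0} - \xi_i|_K$ for the roots outside $\cC$. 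Multiplying these and using $|P_{\bxi}(z)|_K \leq \varepsilon$ produces $\rho^{|\cC|} \prod_{\xi_i \notin \cC} |\xi_{j_0} - \xi_i|_K \leq \varepsilon$; taking the infimum over $\cC \ni \xi_{j_0}$ yields $\rho \leq r_{j_0}(\bxi, \varepsilon)$, so $z \in B_K(\xi_{j_0}; r_{j_0}(\bxi, \varepsilon))$ as required.
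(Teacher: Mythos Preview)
Your argument is correct. Note, however, that the paper does not supply its own proof of Lemma~\ref{lem: PSS}: the result is quoted from \cite{Wright2011} (extending \cite{PSS1999}) and used as a black box, so there is no in-paper proof to compare against. Your direct verification of both inclusions via the ultrametric inequality is the standard route. One remark: the structural fact you establish about a minimizing cluster $\cC_\ast$---namely that $|\xi_j-\xi_i|_K\le r_j$ for $\xi_i\in\cC_\ast$ and $|\xi_j-\xi_i|_K\ge r_j$ for $\xi_i\notin\cC_\ast$---is essentially the content of the paper's Lemma~\ref{self-ref lem} (the ``self-referential'' formula), and your add/remove argument is a cleaner variant of the computation used there.
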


\begin{remark}\label{rmk: coarse scale} By taking $\cC = \{\xi_1, \dots, \xi_n\}$ in the expression defining the radii in \eqref{eq: PSS radii}, we see that $r_j(\bxi;\varepsilon) \leq \varepsilon^{1/n}$ for $1 \leq j \leq n$.
\end{remark}

We will work with the following `self-referential' formula for the radii \eqref{eq: PSS radii}.

\begin{lemma}\label{self-ref lem} Let $\bxi$ and $r_j(\bxi; \varepsilon)$ be as in the statement of Lemma~\ref{lem: PSS}. For $1 \leq j \leq n$ define the root cluster
\begin{equation*}
    \mathcal{C}_j := B_K(\xi_j; r_j(\bxi, \varepsilon)) \cap \{\xi_1, \dots, \xi_n\}.
\end{equation*}
Then
\begin{equation*}
    r_j(\bxi;\varepsilon) = \Big( \frac{\varepsilon}{\prod_{\xi_i \notin \cC_j} |\xi_j - \xi_i|_K} \Big)^{1/|\cC_j|}.
\end{equation*}
\end{lemma}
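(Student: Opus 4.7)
The plan is as follows. Write $r := r_j(\bxi; \varepsilon)$ for brevity and, for any cluster $\mathcal{C} \ni \xi_j$, set
\[
R(\mathcal{C}) := \Big( \frac{\varepsilon}{\prod_{\xi_i \notin \mathcal{C}} |\xi_j - \xi_i|_K} \Big)^{1/|\mathcal{C}|},
\]
so that by definition $r = \min_{\mathcal{C} \ni \xi_j} R(\mathcal{C})$. Since $\xi_j \in \mathcal{C}_j$ trivially, minimality immediately gives $R(\mathcal{C}_j) \geq r$, so the goal reduces to proving the matching upper bound $R(\mathcal{C}_j) \leq r$.

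The key step I would carry out is a \emph{shrinking argument} showing that any minimizing cluster $\mathcal{C}^*$ (i.e., $R(\mathcal{C}^*) = r$) is automatically contained in $\mathcal{C}_j$. Arguing by contradiction, suppose some $\xi_i \in \mathcal{C}^*$ satisfies $|\xi_j - \xi_i|_K > r$; then $\xi_i \neq \xi_j$, so the smaller cluster $\mathcal{C}' := \mathcal{C}^* \setminus \{\xi_i\}$ still contains $\xi_j$, and unwinding the defining formula yields
\[
R(\mathcal{C}')^{|\mathcal{C}'|} = \frac{R(\mathcal{C}^*)^{|\mathcal{C}^*|}}{|\xi_j - \xi_i|_K} < \frac{r^{|\mathcal{C}^*|}}{r} = r^{|\mathcal{C}'|},
\]
so $R(\mathcal{C}') < r$, contradicting minimality. (The edge case $|\mathcal{C}^*| = 1$ forces $\mathcal{C}^* = \{\xi_j\} \subseteq \mathcal{C}_j$, so nothing has to be shrunk.)

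Once the inclusion $\mathcal{C}^* \subseteq \mathcal{C}_j$ is secured, I would finish by a direct factor-by-factor comparison. The disjoint decomposition $\{i : \xi_i \notin \mathcal{C}^*\} = \{i : \xi_i \notin \mathcal{C}_j\} \sqcup \{i : \xi_i \in \mathcal{C}_j \setminus \mathcal{C}^*\}$ yields
\[
R(\mathcal{C}_j)^{|\mathcal{C}_j|} = R(\mathcal{C}^*)^{|\mathcal{C}^*|} \prod_{\xi_i \in \mathcal{C}_j \setminus \mathcal{C}^*} |\xi_j - \xi_i|_K \leq r^{|\mathcal{C}^*|}\, r^{|\mathcal{C}_j| - |\mathcal{C}^*|} = r^{|\mathcal{C}_j|},
\]
where the inequality uses that every $\xi_i \in \mathcal{C}_j$ satisfies $|\xi_j - \xi_i|_K \leq r$ by definition of $\mathcal{C}_j$. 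This gives $R(\mathcal{C}_j) \leq r$, completing the proof.

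I do not expect serious obstacles: the algebraic manipulation is forced once the shrinking move is identified. The only mildly delicate point is recognising that a minimizer must live inside the ball $B_K(\xi_j; r)$, and handling the trivial case $\mathcal{C}^* = \{\xi_j\}$ separately. Substantively, the lemma is a consistency statement saying that the discrete minimum defining $r_j$ is realised by the geometrically natural cluster $\mathcal{C}_j$ of roots within distance $r_j$ of $\xi_j$.
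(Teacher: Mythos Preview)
Your proof is correct. The overall strategy matches the paper's—show $R(\mathcal{C}_j)\le r_j$ by a product comparison against a minimizing cluster—but the organization differs. Rather than first proving via your shrinking move that a minimizer $\mathcal{C}^*$ must lie inside $\mathcal{C}_j$, the paper takes an \emph{arbitrary} minimizing cluster $\mathcal{C}$ and handles both sides of the symmetric difference in one shot: it factors
\[
\prod_{\xi_i \notin \mathcal{C}} |\xi_j-\xi_i|_K \;=\; \prod_{\xi_i \in \mathcal{C}_j\setminus\mathcal{C}} |\xi_j-\xi_i|_K \cdot \prod_{\xi_i \notin \mathcal{C}_j} |\xi_j-\xi_i|_K \cdot \prod_{\xi_i \in \mathcal{C}\setminus\mathcal{C}_j} |\xi_j-\xi_i|_K^{-1},
\]
bounds the first and third products by $r_j^{|\mathcal{C}_j\setminus\mathcal{C}|}$ and $r_j^{-|\mathcal{C}\setminus\mathcal{C}_j|}$ respectively (using $|\xi_j-\xi_i|_K\le r_j \iff \xi_i\in\mathcal{C}_j$), and closes with the count $|\mathcal{C}|+|\mathcal{C}_j\setminus\mathcal{C}|-|\mathcal{C}\setminus\mathcal{C}_j|=|\mathcal{C}_j|$. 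Your shrinking step is effectively absorbed into the $\mathcal{C}\setminus\mathcal{C}_j$ factor. Your route makes it more transparent \emph{why} any minimizer must sit inside $B_K(\xi_j;r_j)$; the paper's is marginally shorter and avoids the case distinction $|\mathcal{C}^*|=1$.
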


\begin{proof} Fix $1 \leq j \leq n$ and let $\cC$ be a root cluster which achieves the minimum in \eqref{eq: PSS radii}, so that
\begin{equation}\label{eq: self-ref 1}
    r_j := r_j(\bxi;\varepsilon) = \Big( \frac{\varepsilon}{\prod_{\xi_i \notin \cC} |\xi_j - \xi_i|_K} \Big)^{1/|\cC|}. 
\end{equation}

Writing
\begin{equation*}
    \prod_{\xi_i \notin \cC} |\xi_j - \xi_i|_K = \prod_{\xi_i \in \cC_j \setminus \cC} |\xi_j - \xi_i|_K \prod_{\xi_i \notin \cC_j} |\xi_j - \xi_i|_K \prod_{\xi_i \in \cC \setminus \cC_j} |\xi_j - \xi_i|_K^{-1}
\end{equation*}
and using the fact that $|\xi_j - \xi_i|_K \leq r_j$ if and only if $\xi_i \in \cC_j$, we deduce that
\begin{equation}\label{eq: self-ref 2}
    \prod_{\xi_i \notin \cC} |\xi_j - \xi_i|_K \leq r_j^{|\cC_j \setminus \cC| - |\cC \setminus \cC_j|} \prod_{\xi_i \notin \cC_j} |\xi_j - \xi_i|_K.
\end{equation}
Combining \eqref{eq: self-ref 1} and \eqref{eq: self-ref 2} together with the elementary count 
\begin{equation*}
    |\cC| + |\cC_j \setminus \cC| - |\cC \setminus \cC_j| = |\cC_j|,
\end{equation*} 
we obtain
\begin{equation*}
  r_j = r_j^{(|\cC_j \setminus \cC| - |\cC \setminus \cC_j|)/|\cC_j|}  \Big( \frac{\varepsilon}{\prod_{\xi_i \notin \cC} |\xi_j - \xi_i|_K} \Big)^{1/|\cC_j|} \geq  \Big( \frac{\varepsilon}{\prod_{\xi_i \notin \cC_j} |\xi_j - \xi_i|_K} \Big)^{1/|\cC_j|}.
\end{equation*}
The desired identity immediately follows. 
\end{proof}

We emphasise that Lemmas \ref{lem: PSS} and \ref{self-ref lem} are valid in any non-archimedean local field. We will apply them to certain field extensions of the field $K$ appearing in Proposition \ref{prop: main}.




\subsection{The main argument} Here we apply the tools introduced in the previous subsection to prove Proposition~\ref{prop: main}.

\begin{proof}[Proof (of Proposition~\ref{prop: main})] The argument is broken into steps.\smallskip

\noindent \underline{Step 1.} Suppose $\bx = (x_1, \dots, x_n)$, $\by  = (y_1, \dots, y_n) \in (\fo_K)^n$ satisfy \eqref{system: powers}, so that
\begin{equation*}
        |p_j(\bx) - p_j(\by)|_K \leq  N^{-n}  \qquad \textrm{for $1 \leq j \leq n$,}
\end{equation*}
where the $p_j \in K[X_1, \dots, X_n]$ are the degree $j$ power sums $p_j(X) = \sum_{\ell=1}^n X_\ell^j$ for $1 \leq j \leq n$  and $N := (q_K)^a$ for some $a \in \N$. Then $\bx$, $\by$ also satisfy
\begin{equation}\label{eq: main prop 1 1}
        |e_j(\bx) - e_j(\by)|_K \leq  N^{-n}  \qquad \textrm{for $1 \leq j \leq n$,}
\end{equation}
where the $e_j \in K[X_1, \dots, X_n]$ are the the degree $j$ elementary symmetric polynomials $e_j(X) = \sum_{k_1 < \cdots <k_j}X_{k_1}\cdots X_{k_j}$ for $1 \leq j \leq n$. Indeed, this is a direct consequence of the Girard--Newton formul\ae
\begin{equation*}
    je_j(X_1,\ldots,X_n) = \sum_{i=1}^j(-1)^{i-1} e_{j - i} (X_1, \ldots, X_n) p_i(X_1, \ldots, X_n) \qquad \textrm{$1 \leq j \leq n$}, 
\end{equation*}
since the hypothesis $\mathrm{char}\,k_K > n$ ensures $|j|_K = 1$ for $1 \leq j \leq n$.
\smallskip

\noindent \underline{Step 2.} Given an $n$-tuple of roots $\bxi = (\xi_1, \dots, \xi_n) \in (\fo_K)^n$, define the polynomial $P_{\bxi}  \in K[X]$ as in \eqref{eq: poly}. In particular,
\begin{equation}\label{eq: main prop 2 1}
    P_{\bxi}(X) = \prod_{j=1}^n (X - \xi_j) = \sum_{j=0}^n (-1)^{n-j}e_{n-j}(\bxi)X^j
\end{equation}

Let $K_{\circ} : K$ be a finite extension, and $|\,\cdot\,|_{K_{\circ}}$ the unique extension of $|\,\cdot\,|_K$ to $K_{\circ}$. We can then interpret $P_{\bxi}$ as lying in the polynomial ring $K_{\circ}[X]$. Moreover, for $\bx$, $\by$ as in Step 1, it then follows that
\begin{equation}\label{eq: main prop 2 2}
    \big\{ z \in \fo_{K_{\circ}} : |P_{\bx}(z)|_{K_{\circ}} \leq N^{-n} \big\} = \big\{ z \in \fo_{K_{\circ}} : |P_{\by}(z)|_{K_{\circ}} \leq N^{-n} \big\}.
\end{equation}
To see this, we note \eqref{eq: main prop 2 1}, \eqref{eq: main prop 1 1} and the ultrametric triangle inequality imply
\begin{equation*}
    |P_{\bx}(z) - P_{\by}(z)|_{K_{\circ}} \leq \max_{0 \leq j\leq n} |e_{n-j}(\bx) - e_{n-j}(\by)|_K |z|_{K_{\circ}}^j \leq N^{-n} \qquad \textrm{for all $z \in \fo_{K_{\circ}}$.}
\end{equation*}
The desired identity \eqref{eq: main prop 2 2} now follows from another application of the ultrametric triangle inequality.\medskip

\noindent \underline{Step 3.} In remaining steps we will analyse the structure of the sublevel sets featured in \eqref{eq: main prop 2 2} in order to determine information about $\bx$, $\by$. We will carry out this analysis at two separate scales: a course scale, introduced here in Step 3, and a finer scale which is analysed in the remaining steps. 

By the Phong--Stein--Sturm sublevel set decomposition from Lemma~\ref{lem: PSS}, and in particular the observation in Remark~\ref{rmk: coarse scale}, for any $\bxi = (\xi_1, \dots, \xi_n) \in (\fo_{K_{\circ}})^n$ we have
\begin{equation}\label{eq: main prop 3 1}
    \big\{ z \in \fo_{K_{\circ}} : |P_{\bxi}(z)|_{K_{\circ}} \leq N^{-n} \big\} \subseteq \bigcup_{j=1}^n B_{K_{\circ}}(\xi_j, N^{-1}). 
\end{equation}
From this and \eqref{eq: main prop 2 2}, we see that:
\begin{itemize}
    \item For all $1 \leq j \leq n$ there exists some $1 \leq j' \leq n$  such that $|x_j - y_{j'}|_K \leq N^{-1}$;
    \item For all $1 \leq j \leq n$ there exists some $1 \leq j' \leq n$ such that $|y_j - x_{j'}|_K \leq N^{-1}$.
\end{itemize}
This sets up a bipartite graph $G = (X, Y, E)$ where the vertex sets $X := \{x_1, \dots, x_n\}$ and $Y := \{y_1, \dots, y_n\}$ are formed by the components of $\bx$ and $\by$ and $x_i \in X$ and $y_j \in Y$ are adjacent if and only if $|x_i - y_j|_K \leq N^{-1}$. It follows from the above that there are no isolated vertices. Furthermore, the ultrametric property implies the connected components $G_1, \dots, G_M$ of $G$ are complete bipartite graphs.

Write $G_m = (U_m, V_m, E_m)$ for $1 \leq m \leq M$. The vertex sets $U_m \subseteq X$ and $V_m \subseteq Y$ are referred to as \textit{superclusters}. We let $\alpha_m := |U_m|$ and $\beta_m := |V_m|$. In light of the above, the problem is reduced to showing 
\begin{equation}\label{eq: main prop 3 2}
    \alpha_m = \beta_m \qquad \textrm{for $1 \leq m \leq M$.}
\end{equation} 
Indeed, if this is the case, then we can define a permutation $\sigma$ on $\{1, \dots, n\}$ such that if $x_j \in U_m$ for some $1 \leq j \leq n$ and $1 \leq m \leq M$, then $y_{\sigma(j)} \in V_m$. By the properties of the superclusters, it follows that $|x_j - y_{\sigma(j)}|_K \leq N^{-1}$ for all $1 \leq j \leq n$. \medskip

\noindent \underline{Step 4.} To prove \eqref{eq: main prop 3 2}, we argue by contradiction. Suppose there exists some $1  \leq m \leq M$ such that $\alpha_m \neq \beta_m$. By relabelling, we may assume without loss of generality that $\beta_1 > \alpha_1$ and, moreover, that $\beta_1/\alpha_1 > 1$ maximises the ratio $\beta_m / \alpha_m$ over all choices of $1 \leq m \leq M$. 

We now analyse the problem at a smaller scale, within the superclusters $U_m$ and $V_m$. Refining \eqref{eq: main prop 3 1}, we know from \eqref{eq: main prop 2 2} and Lemma~\ref{lem: PSS} that
\begin{equation*}
    \bigcup_{j=1}^n B_{K_{\circ}}(x_j, r_j(\bx, N^{-n})) =  \bigcup_{j=1}^n B_{K_{\circ}}(y_j, r_j(\by, N^{-n})).
\end{equation*}

Our first observation is that if $x_j \in U_m$ and $y_{j'} \in V_{m'}$ for $m \neq m'$, then the balls $B_{K_{\circ}}(x_j; r_j(\bx, N^{-n}))$ and $B_{K_{\circ}}(y_{j'}; r_{j'}(\by, N^{-n}))$ are disjoint. This allows us to home in and analyse the superclusters $U_1$, $V_1$ individually. 

To simplify notation, for $u \in U_1$ and $v \in V_1$, write $r_X(u) := r_i(\bx;N^{-n})$ and $r_Y(v) := r_j(\by;N^{-n})$ where $1 \leq i,j \leq n$ is such that $u = x_i$ and $v = y_j$. By the above observations,
\begin{equation*}
    \bigcup_{u \in U_1} B_{K_{\circ}}(u; r_X(u)) = \bigcup_{v \in V_1} B_{K_{\circ}}(v; r_Y(v)).
\end{equation*}
We now apply an ultrametric version of the Vitali cover procedure to pass to disjoint families of balls. In particular, there exist subcollections $U \subseteq U_1$ and $V \subseteq V_1$ such that the collections of balls
\begin{equation*}
    \big\{ B_{K_{\circ}}(u, r_X(u)) : u \in U \big\} \quad \textrm{and} \quad \big\{ B_{K_{\circ}}(v, r_Y(v)) : v \in V \big\}
\end{equation*}
are pairwise disjoint and
\begin{equation*}
 \bigcup_{u \in U} B_{K_{\circ}}(u; r_X(u)) =   \bigcup_{u \in U_1} B_{K_{\circ}}(u; r_X(u)) = \bigcup_{v \in V_1} B_{K_{\circ}}(v; r_Y(v)) = \bigcup_{v \in V} B_{K_{\circ}}(v; r_Y(v)).
\end{equation*}

At this point, we assume our ambient field $K_{\circ}$ is a totally ramified finite extension of $K$. Under this hypothesis, the residue class field of $k_{K_{\circ}}$ is isomorphic to $k_K$. In particular, since by hypothesis $|k_K| \geq \mathrm{char}\,k_K > n$, any ball $B_{K_{\circ}}(x,r)$ cannot be written as a union of $n$ (not necessarily distinct) balls with strictly smaller radii. Consequently, $|U| = |V|$ and there exists enumerations of the sets $U = \{u_1, \dots, u_L\}$, $V = \{v_1, \dots, v_L\}$ such that
\begin{equation}\label{eq: main prop 4 1}
    B_{K_{\circ}}(u_{\ell}, r_X(u_{\ell})) = B_{K_{\circ}}(v_{\ell}, r_Y(v_{\ell})) \qquad \textrm{for $1 \leq \ell \leq L$.}
\end{equation}

At this stage, we wish to conclude that
\begin{equation}\label{eq: main prop 4 2}
    r_X(u_{\ell}) = r_Y(v_{\ell}) \qquad \textrm{for $1 \leq \ell \leq L$.}
\end{equation}
If we work with $K_{\circ} = K$ in \eqref{eq: main prop 2 2}, then \eqref{eq: main prop 4 2} does not necessarily follow from \eqref{eq: main prop 4 1} owing to the discrete nature of the value group. To address this, we now further assume that $K_{\circ} \colon K$ is a degree $n!$ totally ramified extension. Under this hypothesis, the value groups $\Gamma_K$ and $\Gamma_L$ take the form
\begin{equation*}
\Gamma_K = \{q_K^{-\nu} : \nu \in \Z\} \qquad \textrm{and} \qquad \Gamma_{K_{\circ}} = \{q_K^{-\nu/n!} : \nu \in \Z\}. 
\end{equation*}
In particular, $\Gamma_{K_{\circ}}$ contains the quantities $r_X(u_{\ell})$ and $r_Y(v_{\ell})$. Thus, working over $\fo_{K_{\circ}}$, we may deduce \eqref{eq: main prop 4 2} from \eqref{eq: main prop 4 1}.\medskip

\noindent \underline{Step 5.} We now apply the self-referential form of the Phong--Stein--Sturm sublevel decomposition from Lemma~\ref{self-ref lem} to obtain a formula for the radii appearing in \eqref{eq: main prop 4 2}. For $1 \leq \ell \leq L$, let 
\begin{equation*}
    \cC_X(u_{\ell}) := B_{K_{\circ}}(u_{\ell}, r_X(u_{\ell})) \cap X \quad \textrm{and} \quad \cC_Y(v_{\ell}) := B_{K_{\circ}}(v_{\ell}, r_Y(v_{\ell})) \cap Y
\end{equation*}
denote the clusters appearing in Lemma~\ref{self-ref lem}, which realise the minimum in \eqref{eq: PSS radii}. 

We first consider the contributions to the radii arising from roots in superclusters other than $U_1$ and $V_1$. By the ultrametric property, for each $2 \leq m \leq M$ there exists some $D_m > N^{-1}$ such that 
\begin{equation*}
    |u_{\ell} - u'|_K = |v_{\ell} - v'|_K = D_m \qquad \textrm{for $1 \leq \ell \leq L$ and $u' \in U_m$, $v' \in V_m$.}
\end{equation*}
Consequently, recalling the definition of the $\alpha_m$ and $\beta_m$ from Step 3, we have
\begin{equation}\label{eq: main prop 5 1}
  \prod_{u' \in X \setminus U_1} |u_{\ell} - u'|_K =  \prod_{m=2}^M D_m^{\alpha_m} \quad \textrm{and} \quad \prod_{v' \in Y \setminus V_1} |v_{\ell} - v'|_K =  \prod_{m=2}^M D_m^{\beta_m}.
\end{equation}

We now turn to the contributions of roots within $U_1$ and $V_1$. For $1 \leq \ell, \ell' \leq L$ with $\ell \neq \ell'$ we have
\begin{equation*}
    |u_{\ell} - u'|_K = |u_{\ell} - u_{\ell'}|_K = |v_{\ell} - v_{\ell'}|_K = |v_{\ell} - v'|_K  \quad \textrm{for all $u' \in \cC_X(u_{\ell'})$, $v' \in \cC_Y(v_{\ell'})$.}
\end{equation*}
 In particular, if we define $s_{\ell} := |\cC_X(u_{\ell})|$ and $t_{\ell} := |\cC_Y(v_{\ell})|$ for $1 \leq \ell \leq L$, it follows that 
\begin{equation}\label{eq: main prop 5 2}
  \prod_{\substack{u' \in U_1 \\ u' \notin \cC_X(u_{\ell})}} |u_{\ell} - u'|_K =  \prod_{\substack{1 \leq \ell' \leq L \\ \ell' \neq \ell}} |u_{\ell} - u_{\ell'}|_K^{s_{\ell'}} \quad \textrm{and } \prod_{\substack{v' \in V_1 \\ v' \notin \cC_Y(v_{\ell})}} |v_{\ell} - v'|_K =  \prod_{\substack{1 \leq \ell' \leq L \\ \ell' \neq \ell}} |v_{\ell} - v_{\ell'}|_K^{t_{\ell'}}
\end{equation}
whilst we also have
\begin{equation}\label{eq: main prop 5 3}
    s_1 + \cdots + s_L = \alpha_1 < \beta_1 = t_1 + \cdots + t_L.
\end{equation}

Combining Lemma~\ref{self-ref lem} with \eqref{eq: main prop 5 1} and \eqref{eq: main prop 5 2}, and applying the identity \eqref{eq: main prop 4 1}, we conclude that
\begin{equation}\label{eq: main prop 5 4}
    \bigg(\frac{N^{-n}}{\prod_{\substack{1 \leq \ell' \leq L \\ \ell' \neq \ell}} |u_{\ell} - u_{\ell'}|_K^{s_{\ell'}} \prod_{m=2}^M D_m^{\alpha_m}}\bigg)^{1/s_{\ell}} = \bigg(\frac{N^{-n}}{\prod_{\substack{1 \leq \ell' \leq L \\ \ell' \neq \ell}} |v_{\ell} - v_{\ell'}|_K^{t_{\ell'}} \prod_{m=2}^M D_m^{\beta_m}}\bigg)^{1/t_{\ell}}
\end{equation}
for all $1 \leq \ell \leq L$. Thus, raising the above display to the $s_{\ell} t_{\ell}$ power and rearranging the resulting expression gives
\begin{equation}\label{eq: main prop 5 5}
    N^{-n(t_{\ell} - s_{\ell})} \prod_{\substack{1 \leq \ell' \leq L \\ \ell' \neq \ell}} |u_{\ell} - u_{\ell'}|_K^{-(s_{\ell'}t_{\ell} - s_{\ell}t_{\ell'})} = \prod_{m=2}^M D_m^{t_{\ell}\alpha_m - s_{\ell'}\beta_m}.
\end{equation}
Taking the product of either side of the identity \eqref{eq: main prop 5 5} over all choices of $\ell$, we deduce from \eqref{eq: main prop 5 3} that 
\begin{equation*}
 N^{-n(\beta_1 - \alpha_1)} \prod_{\substack{1 \leq \ell, \ell' \leq L \\ \ell' \neq \ell}} |u_{\ell} - u_{\ell'}|_K^{-(s_{\ell'}t_{\ell} - s_{\ell}t_{\ell'})} = \prod_{m=2}^M D_m^{\beta_1\alpha_m - \alpha_1\beta_m}
\end{equation*}
and therefore, by parity considerations,
\begin{equation*}
     N^{-n(\beta_1 - \alpha_1)} = \prod_{m=1}^M D_j^{\beta_1\alpha_m - \alpha_1\beta_m}.
\end{equation*}
From our labelling of the superclusters, we know $\beta_1/\alpha_1 \geq \beta_m/\alpha_m$ for all $1 \leq m \leq M$. Furthermore, since $\alpha_1 + \cdots + \alpha_M = \beta_1 + \cdots + \beta_M = n$ and $\beta_1/\alpha_1 > 1$, there must exist at least one choice of $m$ for which $\beta_1/\alpha_1 > \beta_m/\alpha_m$ (that is, the inequality is strict). Consequently, all of the exponents $\beta_1\alpha_m - \alpha_1\beta_m$ are non-negative and at least one exponent is strictly positive. Thus, since $D_m > N^{-1}$ for $1 \leq m \leq M$, we conclude that
\begin{equation*}
    N^{-n(\beta_1 - \alpha_1)} > N^{-n(\beta_1 - \alpha_1)},
\end{equation*}
which is a contradiction. This arises from the assumption that \eqref{eq: main prop 3 2} fails, and so \eqref{eq: main prop 3 2} must hold, concluding the proof.
\end{proof}




\section{The C\'ordoba--Fefferman argument}\label{sec: Cordoba Fefferman} In this section we apply the standard C\'ordoba--Fefferman argument \cite{Fefferman1973} in order to obtain Theorem~\ref{thm: sf} from Proposition~\ref{prop: main}. 

\begin{proof}[Proof (of Theorem~\ref{thm: sf})] By translation invariance, we may assume $\bx = 0$, Letting $\delta := q_K^{-\alpha}$ and $\varphi := \chi_{B_{\delta^{-n}}}$ denote the characteristic function of the ball $B_{\delta^{-n}} := B(0,q_K^{\alpha n})$, we have
\begin{equation*}
    |Ef|^{2m}\cdot \varphi = \sum_{\substack{I_j, J_j \in \mathcal{I}(\delta) \\ 1 \leq j \leq m}} \prod_{j=1}^m Ef_{I_j} \cdot \varphi \;\overline{\prod_{j=1}^m Ef_{J_j}\cdot \varphi}.
\end{equation*}
Thus, by Parseval's theorem,
\begin{equation}\label{eq: CF 1}
    \|Ef\|_{L^{2m}(B_{\delta^{-n}})}^{2m} = \sum_{\substack{I_j, J_j \in \mathcal{I}(\delta) \\ 1 \leq j \leq m}} \int_{K^n} \Big(\prod_{j=1}^m Ef_{I_j} \cdot \varphi\Big)\;\widehat{}\;(\bxi)\; \overline{\Big(\prod_{j=1}^m Ef_{J_j}\cdot \varphi\Big)\;\widehat{}\;(\bxi)}\,\ud \mu(\bxi).
\end{equation}

Let $\nu$ denote the pushforward of the Haar measure on $\fo_K$ under the moment mapping $\gamma \colon \fo_K \to \fo_K^n$ given by $\gamma(t) := (t, t^2, \dots, t^n)$ for all $t \in \fo_K$. Observe that 
\begin{equation*}
    Eg = (\,g \ud \nu\,)\;\widecheck{}\; \qquad \textrm{for $g \in L^1(\fo_K)$}
\end{equation*}
and so $(E f_I \cdot \varphi)\;\widehat{}\; = \widehat{\varphi}\; \ast f_I\ud \nu$ for any $I \in \mathcal{I}(\delta)$. Thus, fixing $I_j$, $J_j \in \mathcal{I}(\delta)$ for $1 \leq j \leq n$, it follows that the right-hand integrand in \eqref{eq: CF 1} can be written as 
\begin{equation}\label{eq: CF 2}
     \big(\widehat{\varphi}\; \ast f_{I_1}\ud \nu\big) \ast \cdots \ast \big(\widehat{\varphi}\; \ast f_{I_m}\ud \nu\big) (\bxi) \; \overline{\big(\widehat{\varphi}\; \ast f_{J_1}\ud \nu\big) \ast \cdots \ast \big(\widehat{\varphi}\; \ast f_{J_n}\ud \nu\big)(\bxi)}.
\end{equation}

By a simple computation, $\widehat{\varphi} = \delta^{-n^2} \chi_{B(0,\delta^n)}$ and, in particular, 
\begin{equation*}
   \mathrm{supp}\,(\widehat{\varphi} \ast f_I\ud \nu) \subseteq \big\{ \bxi \in \widehat{K}^n : |\bxi - \gamma(s)|_K \leq \delta^n \textrm{ for some $s \in I$} \big\} \qquad \textrm{for $I \in \mathcal{I}(\delta)$}.
\end{equation*}
Moreover, if $\bxi \in \widehat{K}^n$ lies in the support of the function in \eqref{eq: CF 2}, then
\begin{equation*}
    \Big|\bxi - \sum_{j=1}^m\gamma(s_j)\Big|_K \leq \delta^n  \textrm{ and }  \Big|\bxi - \sum_{j=1}^m\gamma(t_j)\Big|_K \leq \delta^n \textrm{ for some $s_j \in I_j$, $t_j \in J_j$, $1 \leq j \leq m$.}
\end{equation*}

Now suppose the support of the function in \eqref{eq: CF 2} is non-empty for some choice of $I_j$, $J_j \in \mathcal{I}(\delta)$ for $1 \leq j \leq m$. By the preceding observations, there must exist $s_j \in I_j$, $t_j \in J_j$ for $1 \leq j \leq m$ such that
\begin{equation*}
    \Big|\sum_{j=1}^m\gamma(s_j) - \sum_{j=1}^m\gamma(t_j)\Big|_K \leq q_K^{-\alpha n}. 
\end{equation*}
Applying Proposition~\ref{prop: main}, there exists a permutation $\sigma$ on $\{1,\cdots, m\}$ such that $|t_j - s_{\sigma(j)}|_K \leq q_K^{-\alpha}$ for all $1 \leq j \leq m$. By the ultrametric property, this can only happen if $J_j = I_{\sigma(j)}$ for all $1 \leq j \leq m$.  

In light of the discussion of the previous paragraph, we see that all the `off-diagonal' terms of the right-hand sum in \eqref{eq: CF 1} are zero and, in particular, 
\begin{align*}
    \|Ef\|_{L^{2m}(B_{\delta^{-n}})}^{2m} &\leq m! \sum_{I_1, \dots, I_m \in \mathcal{I}(\delta)} \int_{K^n} \Big|\Big(\prod_{j=1}^m Ef_{I_j} \cdot \varphi\Big)\;\widehat{}\;(\bxi)\;\Big|^2\,\ud \mu(\bxi) \\
    &\leq m!  \Big\|\big( \sum_{I \in \mathcal{I}(\delta)} |Ef_I|^2\big)^{1/2}\Big\|_{L^{2k}(B_{\delta^{-n}})}^{2m} 
\end{align*}
where the second identity is a consequence of Plancherel's theorem. This concludes the proof. 
\end{proof}

\appendix




\section{Adapting the argument to archimedean local fields}\label{appendix}




\subsection{Key ingredients}

In this section we sketch how the arguments of \S\ref{sec: proof} can be adapted to work in the archimedean setting. The main result is as follows. 

\begin{proposition}\label{prop: arch main} Let $(K, |\,\cdot\,|_K)$ be an archimedean local field and $n \in \N$. There exists a constant $C_n \geq 1$, depending only on $n$, such that the following holds. Let $N \geq 1$ and suppose $(x_1, \dots, x_n)$, $(y_1, \dots,  y_n) \in B_K(0,1)^n$ satisfy
\begin{equation*}
   |x_1^j + \cdots + x_n^j - y_1^j - \cdots - y_n^j|_K \leq N^{-n} \qquad \textrm{for $1 \leq j \leq n$.}
\end{equation*}
Then there exists a permutation $\sigma$ on $\{1, \cdots, n\}$ such that $|x_j - y_{\sigma(j)}|_K \leq C_n N^{-1}$ for all $1 \leq j \leq n$.
\end{proposition}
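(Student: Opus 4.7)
My plan is to follow the proof of Proposition~\ref{prop: main} in \S\ref{sec: proof} closely, replacing exact equalities with $O_n(1)$-approximate identities throughout. Two features of the archimedean setting actually simplify matters: the value group $\Gamma_K = (0,\infty)$ is already dense, so the totally ramified extension trick of Step 4 is no longer needed; and no residue-field obstruction is present, so no hypothesis on characteristic is required.

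First, the Girard--Newton formul\ae\ together with the hypothesis $\bx, \by \in B_K(0,1)^n$ give $|e_j(\bx) - e_j(\by)|_K \leq C_n N^{-n}$ for $1 \leq j \leq n$. Consequently $|P_{\bx}(z) - P_{\by}(z)|_K \leq C_n N^{-n}$ uniformly for $z \in B_K(0,1)$, so the sublevel sets $\{z \in B_K(0,1) : |P_{\bx}(z)|_K \leq N^{-n}\}$ and $\{z \in B_K(0,1) : |P_{\by}(z)|_K \leq N^{-n}\}$ differ only by a harmless multiplicative enlargement of the tolerance level. The Phong--Stein--Sturm decomposition in its original archimedean form \cite{PSS1999} then describes each sublevel set as a union of the balls $B_K(\xi_j; r_j(\bxi;\varepsilon))$ up to $O_n(1)$ factors in the radii, and the archimedean analogues of Lemma~\ref{lem: PSS}, Remark~\ref{rmk: coarse scale} and Lemma~\ref{self-ref lem} hold with such implicit constants.

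The coarse-scale analysis of Step 3 then produces a bipartite graph $G = (X,Y,E)$ in which $x_i$ and $y_j$ are adjacent when $|x_i - y_j|_K \leq C_n N^{-1}$, and $G$ has no isolated vertices. Without the ultrametric property, the connected components are no longer literally complete bipartite, but the triangle inequality still confines each component to a set of diameter $O_n(N^{-1})$. By iteratively merging clusters that are closer than $K N^{-1}$, for a large constant $K = K_n$ to be fixed at the end of the proof, one obtains a decomposition of $X$ and $Y$ into superclusters $(U_m, V_m)$ with inter-cluster gaps $D_m \geq K N^{-1}$. The objective is again to establish $|U_m| = |V_m|$ for all $m$.

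The main obstacle lies in adapting Steps 4--5: the ultrametric-based exact ball identity \eqref{eq: main prop 4 1} must be replaced by an ordinary Vitali covering argument inside a supercluster, which yields only the comparability $r_X(u_\ell) \asymp_n r_Y(v_\ell)$ between matched radii. Substituting this into the self-referential formula and taking the product over $\ell$, the sharp non-archimedean equation \eqref{eq: main prop 5 5} is replaced by an approximate identity of the shape
\begin{equation*}
c_n \prod_{m=1}^M D_m^{\beta_1 \alpha_m - \alpha_1 \beta_m} \leq N^{-n(\beta_1 - \alpha_1)} \leq C_n \prod_{m=1}^M D_m^{\beta_1 \alpha_m - \alpha_1 \beta_m}.
\end{equation*}
As in the non-archimedean case, the exponents $\beta_1 \alpha_m - \alpha_1 \beta_m$ are non-negative, sum to $n(\beta_1 - \alpha_1) \geq n$, and at least one is strictly positive. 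Since $D_m \geq K N^{-1}$, the lower bound forces $c_n K^{n(\beta_1 - \alpha_1)} N^{-n(\beta_1 - \alpha_1)} \leq N^{-n(\beta_1 - \alpha_1)}$, which is a contradiction once $K = K_n$ is chosen sufficiently large. The delicate bookkeeping is that the Vitali and Phong--Stein--Sturm constants depend only on $n$; $K_n$ can therefore be chosen as a function of $n$ alone to defeat them, and the final constant $C_n$ in the proposition is then determined. For $N$ below a dimensional threshold the conclusion is trivial after enlarging $C_n$.
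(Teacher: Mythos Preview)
Your outline is correct and mirrors the paper's own adaptation: work over $K$ directly, use the archimedean Phong--Stein--Sturm bounds with $O_n(1)$ losses, define superclusters at a threshold $\rho N^{-1}$ with $\rho$ chosen large at the end, and derive an approximate form of \eqref{eq: main prop 5 5} whose contradiction is forced once $\rho$ beats the accumulated $O_n(1)$ constants. One point is underdeveloped: the phrase ``an ordinary Vitali covering argument'' does not by itself produce the matched pairs $(u_\ell,v_\ell)$ with $r_X(u_\ell)\asymp_n r_Y(v_\ell)$ and the \emph{strong} separation needed for the analogue of \eqref{eq: main prop 5 2}; the paper isolates this as a stand-alone two-family Vitali-type lemma (Lemma~\ref{lem: Vitali variant}), with the constant allowed to depend on $n$, and also records a dilated variant of Lemma~\ref{self-ref lem} for clusters $\cC_{j,\lambda}$, which is what your sketch implicitly uses when passing from comparable balls to the approximate radius identity.
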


Proposition~\ref{prop: arch main} can be combined with the C\'ordoba--Fefferman argument described in \S\ref{sec: Cordoba Fefferman} to yield the analogue of Theorem~\ref{thm: sf} for archimedean local fields. The proof of Proposition~\ref{prop: arch main} closely follows that of Proposition~\ref{prop: main}, albeit with a few minor points of divergence. Here we review the key tools used in the argument and how they differ from those used in the non-archimedean setting.\medskip

\noindent\textit{The Phong--Stein--Sturm decomposition.} An approximate version of Lemma~\ref{lem: PSS} holds over archimedean local fields. Moreover, if $(K, |\,\cdot\,|_K)$ is any valued field with non-trivial absolute value and $\bxi = (\xi_1, . . . , \xi_n) \in B_K(0,1)^n$ is an $n$-tuple of roots, then for all $0 < \varepsilon \leq 1$ we have
\begin{equation*}
  \bigcup_{j=1}^n B_K(\xi_j; 2^{-n} r_j(\bxi, \varepsilon))  \subseteq  \{ z \in K : |P_{\bxi}(z)| \leq \varepsilon\} \subseteq  \bigcup_{j=1}^n B_K(\xi_j; 2^n r_j(\bxi, \varepsilon));
\end{equation*}
see \cite[Proposition 3.3]{KW2012}. We also note that for any $\lambda \geq 1$ we have
\begin{equation*}
    \lambda^{1/n} r_j(\bxi, \varepsilon) \leq  r_j(\bxi, \lambda \varepsilon) \leq \lambda r_j(\bxi, \varepsilon). 
\end{equation*}

Using the above observations, Steps 1 - 3 in the proof of Proposition~\ref{prop: main} can be carried over in a straightforward manner to the archimedean setting, with additional constant factors appearing throughout the argument. In contrast with the non-archimedean case, we work directly over the field $K$ rather than some field extension (indeed, no rich theory of field extensions is available in the archimedean setting). The superclusters are defined using the condition $|x - y|_K \leq \rho N^{-1}$, where $\rho \geq 1$ is a parameter which is chosen large, depending only on $n$, so as to force a contradiction at the end of the argument.\medskip

\noindent\textit{A Vitali-type covering lemma.} Step 4 of the proof of Proposition~\ref{prop: main} featured an application of the ultrametric Vitali covering lemma, which was used to pass to the two identical families of balls in \eqref{eq: main prop 4 1}. The ultrametric covering lemma is very clean, owing to the fact that any two balls in an ultrametric space are either nested or disjoint. To adapt the argument to the archimedean setting, we make use of the following somewhat technical variant of the original Vitali covering lemma. 

\begin{lemma}\label{lem: Vitali variant} Let $\cB_X$, $\cB_Y$ be finite sets of closed balls in $\R^d$ of cardinality at most $n \in \N$. Suppose that $\lambda \geq 1$ is such that 
\begin{equation}\label{eq: Vitali lem}
    \bigcup_{B_X \in \cB_X} B_X \subseteq \bigcup_{B_Y \in \cB_Y} \lambda \cdot B_Y \quad \textrm{and} \quad \bigcup_{B_Y \in \cB_Y} B_Y \subseteq \bigcup_{B_X \in \cB_X} \lambda \cdot B_X.
\end{equation}
Then there exist $\cB_X' = \{B_X^1, \dots, B_X^L\} \subseteq \cB_X$ and $\cB_Y' = \{B_Y^1, \dots, B_Y^L\} \subseteq \cB_Y$ and a constant $R = R(n, \lambda)$ such that the following hold:
\begin{enumerate}[1)]
\item \textbf{Strong separation.} For all $1 \leq \ell < \ell' \leq L$, we have
\begin{equation*}
    2R\cdot B_X^{\ell} \cap 2R \cdot B_X^{\ell'} = \emptyset \quad \textrm{and} \quad 2R \cdot B_Y^{\ell} \cap 2R \cdot B_Y^{\ell'} = \emptyset.
\end{equation*}
    \item \textbf{Vitali covering.}  
    \begin{equation*}
      \displaystyle \bigcup_{B_X \in \cB_X} B_X \subseteq \bigcup_{\ell=1}^L R \cdot B_X^{\ell} \quad \textrm{and} \quad  \bigcup_{B_Y \in \cB_Y} B_Y \subseteq \bigcup_{\ell=1}^L R \cdot B_Y^{\ell} 
    \end{equation*}
    
    \item \textbf{Comparable balls.} For all $1 \leq \ell \leq L$, we have
    \begin{equation*}
        B_X^{\ell} \subseteq R \cdot B_Y^{\ell} \quad \textrm{and} \quad B_Y^{\ell} \subseteq R \cdot B_X^{\ell}.
    \end{equation*}
\end{enumerate}
\end{lemma}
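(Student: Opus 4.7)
\smallskip
\noindent\textbf{Proof proposal.}
The plan is to adapt the standard Vitali covering lemma to the paired setting by coupling a greedy selection on the combined collection $\cB_X\cup\cB_Y$ with a volume packing argument that furnishes partners from the opposite family.

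First I would distil a ``per-ball'' consequence of the covering hypothesis \eqref{eq: Vitali lem}. Since $\bigcup\cB_X\subseteq\bigcup_{B_Y\in\cB_Y}\lambda\cdot B_Y$ with $|\cB_Y|\leq n$, comparing Lebesgue volumes gives, for every $B_X\in\cB_X$ of radius $r$, some $B_Y\in\cB_Y$ with $r(B_Y)\geq r/\rho_0$ and $\lambda\cdot B_Y\cap B_X\neq\emptyset$, where $\rho_0:=n^{1/d}\lambda$; symmetrically for $\cB_Y$. In particular, any such pair $(B_X,B_Y)$ already satisfies property~3) with constant $(2+\lambda)\rho_0$.

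Second, I would totally order $\cB_X\cup\cB_Y$ by decreasing radius and process the balls greedily. At each unmarked ball $B$ (say $B\in\cB_X$, radius $r$), invoke the volume argument to select a candidate partner $B^{\dagger}\in\cB_Y$. If $B^{\dagger}$, or any other valid candidate, is not already used in a pair, then add $(B,B^{\dagger})$ to the list of selected pairs and mark all balls (from either family) whose centres lie within $\eta r$ of the centre of $B$, for a large constant $\eta=\eta(n,\lambda)$ to be fixed. If instead every valid candidate is used, then a triangle inequality through the used candidate and its partner shows that $B$ already sits inside a bounded dilate of the existing $\cB_X$-member of that pair, so no new pair is needed; one still marks the neighbourhood of $B$ and continues. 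Setting $\cB_X':=\{B_X^{\ell}\}$, $\cB_Y':=\{B_Y^{\ell}\}$ at the end furnishes the candidate output.

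Third, I would verify the three conclusions. Property~3) is built in. For property~1) one uses that for $\ell<\ell'$ the pivot $P_{\ell'}$ at step $\ell'$ was unmarked at step $\ell$, so its centre lies outside the $\eta r(P_{\ell})$-ball around the centre of $P_{\ell}$; after subtracting the $(1+\lambda)r(P_{\bullet})$-offsets coming from the pair structure, this yields the required $2R$-separation provided $\eta$ dominates $R$. For property~2) one distinguishes three ways a ball $B\in\cB_X$ can end the procedure: as a pair member (trivial), absorbed by a same-family pivot (so $B\subseteq(\eta+1)\cdot B_X^{\ell}$), or covered via the triangle-inequality argument in the ``partner already used'' case (so $B\subseteq R\cdot B_X^{\ell}$ for a constant $R=R(\rho_0,\lambda)$).

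The main obstacle is the tension between the two constants: the naive coverage bound through same-family absorption forces $R\geq\eta+1$, while separation forces $\eta\gtrsim R$. The resolution I expect to be the most delicate part of the argument is to refine the bookkeeping so that absorption into a same-family pivot's neighbourhood is re-interpreted through the \emph{partner} of that pivot, using the comparable-balls property; this converts the $\eta$-scale coverage estimate into an $R$-scale one depending only on $\rho_0$ and $\lambda$, breaking the circularity and allowing $R$ and $\eta$ to be chosen as suitable polynomials in $\rho_0$ and $\lambda$.
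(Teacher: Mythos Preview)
Your diagnosis of the obstacle is correct, but your proposed resolution does not remove it. Consider a ball $B\in\cB_X$ that gets marked because its centre lies within $\eta\, r(P_\ell)$ of a \emph{same-family} pivot $P_\ell=B_X^\ell$. Containing $B$ in $R\cdot B_X^\ell$ then forces $R\gtrsim\eta$; routing through the partner $B_Y^\ell$ changes nothing, because the comparable-balls relation only moves centres by $O_{\rho_0,\lambda}(r(P_\ell))$ and rescales radii by $O(\rho_0)$, so the containment constant for $B$ in any bounded dilate of $B_X^\ell$ (or of $B_Y^\ell$) is still $\gtrsim\eta$. Meanwhile the strong separation in 1) requires the $2R$-dilates to be disjoint, which needs $\eta\gtrsim R$. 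The circularity is therefore genuine and is not broken by any bookkeeping that keeps $\eta$ and $R$ as fixed constants chosen in a single pass.

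The paper sidesteps this by decoupling the tasks. It first runs a greedy pairing algorithm (similar in spirit to yours, with the volume argument you isolate) to produce paired subfamilies satisfying only 2) and 3); strong separation is \emph{not} attempted here. Then it applies a separate lemma: given at most $n$ balls and any target ratio $R$, iterate the classical Vitali covering lemma on successively larger dilates; by pigeonhole the subcollection must stabilise within $n-1$ iterations, and at the stabilising step the surviving balls are simultaneously $\lambda_M$-covering and $R\lambda_M$-separated. Applying this post-processing to the $\cB_X'$ family (and using 3) to transfer to $\cB_Y'$) yields 1) with an admissible constant. The point is that the pigeonhole step, exploiting the cardinality bound $n$, is precisely what allows the separation factor to exceed the covering factor --- something a single-pass greedy selection with a fixed $\eta$ cannot achieve.
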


Here, given a ball $B \subseteq \R^d$ and $\lambda > 0$, we let $\rad B$ denote the radius of $B$ and $\lambda \cdot B$ denote the ball concentric to $B$ but with radius $\lambda \rad B$. In applying the lemma, we identify the archimedean local field $\C$ with the metric space $\R^2$. 

\begin{remark} A key feature of Lemma~\ref{lem: Vitali variant} is that the parameter $R$ is allowed to depend on the number of balls $n$ (in stark contrast with the classical Vitali covering lemma). This flexibility allows for the comparability between the balls $B_X^{\ell}$ and $B_Y^{\ell}$. It also allows for the strong separation property in 1), where the separation parameter $2R$ is \textit{larger} than the dilation parameter in 2). 
\end{remark}

The comparability property 3) is a surrogate for the identification between balls in \eqref{eq: main prop 4 1} in the archimedean setting. Similarly, the strong separation property 1) is used to establish approximate versions of the identities in \eqref{eq: main prop 5 2}. 

Since Lemma~\ref{lem: Vitali variant} is a new feature of the argument, we present the full proof in \S\ref{sec: Vitali} below.\medskip  

\noindent\textit{The self-referential formula for the radii.} The final ingredient we highlight from the proof of Proposition~\ref{prop: main} is the self-referential formula for the radii from Lemma~\ref{self-ref lem}; recall, this is used to establish the identity \eqref{eq: main prop 5 4} in Step 5. The proof of Lemma~\ref{self-ref lem} does not rely on the ultrametric triangle inequality, and the result remains valid as stated in any valued field $(K, |\,\cdot\,|_K)$ with non-trivial absolute value. However, for the proof of Proposition~\ref{prop: arch main} we require a slight extension of the formula. For $1 \leq j \leq n$ and $\lambda \geq 1$ define the root cluster
\begin{equation*}
    \mathcal{C}_{j,\lambda} := B_K(\xi_j; \lambda r_j(\bxi, \varepsilon)) \cap \{\xi_1, \dots, \xi_n\}.
\end{equation*}
Then the proof of Lemma~\ref{self-ref lem} shows that 
\begin{equation*}
    r_j(\bxi;\varepsilon) \leq \Big( \frac{\varepsilon}{\prod_{\xi_i \notin \cC_{j,\lambda}} |\xi_j - \xi_i|_K} \Big)^{1/|\cC_{j,\lambda}|} \leq \lambda r_j(\bxi;\varepsilon). 
\end{equation*}
The approximate formula can be used to establish an approximate version of \eqref{eq: main prop 5 4}.




\subsection{Proof of the Vitali-type lemma}\label{sec: Vitali} In this section, we prove Lemma~\ref{lem: Vitali variant}. The first step is the following simple consequence of the classical Vitali covering lemma.

\begin{lemma}\label{lem: strong sep} Let $\cB$ be a finite collection of balls in $\R^d$ of cardinality at most $n$ and $R \geq 1$. Then there exists a subcollection $\cB \subseteq \cB$ and a constant $\lambda = \lambda(n, R) \geq 1$ depending only on $n$ and $R$ such that
\begin{enumerate}[1)]
    \item \textbf{Strong separation.} The large dilates $\{ \lambda R \cdot B' : B' \in \cB'\}$ are pairwise disjoint.\smallskip
    \item \textbf{Vitali covering.} The small dilates $\{ \lambda  \cdot B' : B' \in \cB'\}$ form a Vitali cover in the sense that
    \begin{equation*}
        \bigcup_{B \in \cB} B \subseteq \bigcup_{B' \in \cB'} \lambda \cdot B'.
    \end{equation*}
\end{enumerate}
\end{lemma}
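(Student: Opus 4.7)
\noindent\textit{Proof proposal (Lemma~\ref{lem: strong sep}).} The plan is to iteratively apply a greedy Vitali-type selection, with the dilation parameter chosen to grow after each pass so as to amplify the separation. Since $|\cB| \leq n$, the procedure must stabilise after at most $n$ rounds, which allows $\lambda$ to be bounded purely in terms of $n$ and $R$.

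More precisely, I would set $\cB^0 := \cB$ and $c^0 := 1$ and iteratively define collections $\cB^k$ and constants $c^k \geq 1$ as follows. At stage $k+1$, let $s^{k+1} := R \, c^k$ and produce $\cB^{k+1} \subseteq \cB^k$ by the standard greedy procedure: list the balls of $\cB^k$ in order of decreasing radius and admit a ball $B$ to $\cB^{k+1}$ if and only if $s^{k+1} \cdot B$ is disjoint from $s^{k+1} \cdot B'$ for every $B'$ already admitted. By construction, the $s^{k+1}$-dilates of $\cB^{k+1}$ are pairwise disjoint, and if $B \in \cB^k \setminus \cB^{k+1}$ then $s^{k+1} \cdot B$ meets $s^{k+1} \cdot B''$ for some $B'' \in \cB^{k+1}$ with $\rad B'' \geq \rad B$, so that by the triangle inequality
\begin{equation*}
    |c(B) - c(B'')| + \rad B \leq 2 s^{k+1} \rad B'' + \rad B \leq (2 s^{k+1} + 1) \rad B'',
\end{equation*}
which shows $B \subseteq (2 s^{k+1} + 1) \cdot B''$.

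I would then track how the overall covering constant for the original collection $\cB$ evolves under these iterations. By induction, every $B \in \cB$ is contained in $c^k \cdot B'$ for some $B' \in \cB^k$. Combining the bound above with the fact that $B''$ dominates $B'$ in radius (since $B''$ was processed first at stage $k+1$), one obtains the nesting estimate $c^k \cdot B' \subseteq (2 s^{k+1} + c^k) \cdot B''$, yielding the recursion $c^{k+1} = 2 s^{k+1} + c^k = (2R+1) c^k$; hence $c^k = (2R+1)^k$. The key observation is that whenever the procedure fails to terminate at stage $k$, i.e.\ $\cB^{k+1} \subsetneq \cB^k$, we lose at least one ball, so stabilisation $\cB^{K} = \cB^{K-1}$ must occur for some $K \leq n$.

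At this stable stage, setting $\cB' := \cB^{K-1}$ and $\lambda := c^{K-1}$, the $s^{K} = R \lambda$-dilates of $\cB'$ are pairwise disjoint and the $\lambda$-dilates cover every $B \in \cB$, with $\lambda \leq (2R+1)^{n-1}$ depending only on $n$ and $R$. The main technical point to verify carefully will be the composition of coverings across stages, in particular the dependence of the nesting estimate on the radii being processed in decreasing order at each stage; once this bookkeeping is in place, the geometric decay of $|\cB^k|$ closes the argument.
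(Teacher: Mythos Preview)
Your proposal is correct and follows essentially the same strategy as the paper's proof: iterate the classical Vitali selection with a dilation parameter that grows geometrically between passes, and use the fact that each non-stabilising pass strictly shrinks the collection (so the process halts within $n$ steps). The only cosmetic differences are that the paper invokes the Vitali covering lemma as a black box at each stage with $\lambda_m = (3R)^m$, whereas you spell out the greedy selection explicitly and track the slightly sharper recursion $c^{k+1} = (2R+1)c^k$; the nesting estimate you flag as the point requiring care is exactly the inductive covering bound the paper uses, and your verification via $\rad B' \le \rad B''$ is sound.
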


\begin{proof}
\noindent The proof is based on repeated application of the classical Vitali covering lemma and pigeonholing. Starting with $\cB_0 := \cB$, we recursively construct a chain of proper subsets $\cB_m \subset \cB_{m-1} \subset \dots \subset \cB_0$ such that
\begin{equation}\label{eq: strong sep 1a}
    \bigcup_{B \in \cB} B \subseteq \bigcup_{B_m \in \cB_m} \lambda_m \cdot B  
\end{equation}
where $\lambda_m := (3R)^m$.

Suppose that $\cB_m$ has already been constructed and satisfies \eqref{eq: strong sep 1a}. Apply the classical Vitali covering lemma to the collection of dilated balls $\{R \lambda_m \cdot B : B \in \cB_m\}$ to obtain a subcollection $\cB_{m+1} \subseteq \cB_m$ such that 
\begin{equation*}
    \{R \lambda_m B : B \in \cB_{m+1}\} \quad \textrm{are pairwise disjoint}
\end{equation*}
and, noting $\lambda_{m+1} = 3R\lambda_m$, 
\begin{equation*}
    \bigcup_{B \in \cB} B \subseteq \bigcup_{B \in B_m} R\lambda_m \cdot B \subseteq \bigcup_{B \in \cB_{m+1}} \lambda_{m+1} \cdot B.
\end{equation*}
If $\cB_{m+1} = \cB_m$, then the algorithm terminates; otherwise, $\cB_{m+1} \subset \cB_m$ is a \textit{proper} subset, as required. 

By pigeonholing, the algorithm must terminate after at most $n-1$ steps. If $0 \leq M \leq n-1$ is the terminal step, then the desired properties hold with $\cB' := \cB_M$ and $\lambda := \lambda_M$. 
\end{proof}

We now turn to the proof of Lemma~\ref{lem: Vitali variant}. For a pair of balls $B_1$, $B_2 \subseteq \R^d$ we frequently make use of the following consequence of triangle inequality: 
\begin{equation}\label{eq: triangle ineq}
    \textrm{If $B_1 \cap B_2 \neq \emptyset$, then $B_1 \subseteq \lambda \cdot B_2$ where $\rad(\lambda\cdot B_2) = 2\rad B_1 + \rad B_2$.} 
\end{equation}
Note, in particular, that the dilate $\lambda \cdot B_2$ in the above display satisfies 
\begin{equation*}
    \rad(\lambda\cdot B_2) \leq 3 \max\{ \rad B_1, \rad B_2\}.
\end{equation*}

\begin{proof}[Proof (of Lemma~\ref{lem: Vitali variant})] We first note that it suffices to construct families $\cB_X'$ and $\cB_Y'$ satisfying properties 2) and 3) only. Indeed, once this is achieved, one may apply Lemma~\ref{lem: strong sep} to pass subcollections of $\cB_X'$ and $\cB_Y'$ which satisfy 1) in addition to 2) and 3), with a larger (but nevertheless still admissible) choice of $R$. More precisely, we first apply Lemma~\ref{lem: strong sep} to, say, the collection $\cB_X'$ (or balls obtained by suitably dilating the $B_X' \in \cB_X'$) to pass to a subcollection which  satisfies the strong separation. One can then pass to a suitable subcollection of $\cB_Y'$ by using the comparability property 3).\medskip

We now turn to the task of constructing the sets $\cB_X'$ and $\cB_Y'$ satisfying 2) and 3). To this end, we will construct a sequence of balls $B_X^1, \dots, B_X^{\ell} \in \cB_X$ and $B_Y^1, \dots, B_Y^{\ell} \in \cB_Y$ and a sequence of constants $C_{\ell} \geq \cdots \geq C_1 \geq 1$ using a recursive algorithm. In particular, defining
\begin{align}
\label{eq: Vitali -1}
    \cB_{X,\ell} &:= \{B_X \in \cB_X : B_X  \cap C_k \cdot B_X^k = \emptyset \textrm{ for all $1 \leq k \leq \ell$}\}, \\
\nonumber
    \cB_{Y,\ell} &:= \{B_Y \in \cB_Y : B_Y  \cap C_k \cdot B_Y^k = \emptyset \textrm{ for all $1 \leq k \leq \ell$}\},
\end{align}
these objects have the following properties:\medskip

\noindent 1) \textbf{Strong separation.} Let $\rho \geq 1$ be a fixed parameter, chosen suitably large depending only on $n$ and $\lambda$ to satisfy the forthcoming requirements of the proof. 
\begin{enumerate}
\item[1)$_{X, \ell}$] If $B_X \in \cB_{X,\ell}$, then $B_X \cap \rho C_k \cdot B_{X}^k = \emptyset$ for $1 \leq k \leq \ell$, \smallskip
\item[1)$_{\,Y, \ell}$] If $B_Y \in \cB_{Y,\ell}$, then $B_Y \cap \rho C_k \cdot B_{Y}^k = \emptyset$ for $1 \leq k \leq \ell$.
\end{enumerate}
This condition will play a minor technical role in the proof.\smallskip

\noindent 2) \textbf{Vitali condition.} Let $C = C(n, \lambda) := \lambda n^{1/d}$.
\begin{enumerate}
    \item[2)$_{X, \ell}$] If $B_X \in \cB_{X,k-1}$, then $\rad B_X \leq C\rad B_X^k$ for $1 \leq k \leq \ell$; \smallskip
    \item[2)$_{\,Y, \ell}$]  If $B_Y \in \cB_{Y,k-1}$, then $\rad B_Y \leq C \rad B_Y^k$ for $1 \leq k \leq \ell$;
\end{enumerate}
\noindent 3) \textbf{Comparable balls.} Let $\bar{C} = \bar{C}(n,\lambda) := 2C +1$. 
\begin{enumerate}
    \item[3)$_{\ell}$] For all $1 \leq k \leq \ell$, we have
    \begin{equation*}
         B_X^k \subseteq \bar{C} \cdot B_Y^k \quad \textrm{and} \quad  B_Y^k \leq \bar{C} \cdot B_X^k.
    \end{equation*}
\end{enumerate}

Suppose $B_X^1, \dots, B_X^{\ell} \subseteq \cB_X$, $B_Y^1, \dots, B_Y^{\ell} \subseteq \cB_Y$ and $(C_k)_{k=1}^{\ell}$ have already been constructed and satisfy the properties listed above. \medskip

\noindent \texttt{Stopping condition.} If either $\cB_{X,\ell} = \emptyset$ or $\cB_{Y,\ell} = \emptyset$, then the algorithm terminates.\medskip

\noindent \texttt{Recursive step.}  Suppose the algorithm has not terminated at step $\ell$ so that $\cB_{X,\ell} \neq \emptyset$ and $\cB_{Y,\ell} \neq \emptyset$. Let $B_X^{\ell+1, *} \in \cB_{X,\ell}$ and $B_Y^{\ell+1, *} \in \cB_{Y,\ell}$ be balls of maximal radii lying in these sets. 

By symmetry, we may assume that $\rad B_X^{\ell+1,*} \geq \rad B_Y^{\ell+1,*}$. In this case, we define $B_X^{\ell+1} := B_X^{\ell+1,*}$, so that Property 2)$_{X, \ell+1}$ clearly holds.

We claim that
\begin{equation}\label{eq: Vitali 1}
   B_X^{\ell+1} \subseteq \bigcup_{B_Y \in \cB_{Y,\ell}} \lambda \cdot B_Y.
\end{equation}
Indeed, suppose the above inclusion fails so that, by the hypothesis \eqref{eq: Vitali lem}, there exists some $B_Y \in \cB_Y \setminus \cB_{Y,\ell}$ such that
\begin{equation}\label{eq: Vitali 2}
   B_X^{\ell+1} \cap \lambda \cdot B_Y  \neq \emptyset.  
\end{equation}
Since $B_Y \in \cB_Y \setminus \cB_{Y,\ell}$, there exists some $1 \leq k \leq \ell$ such that 
\begin{equation}\label{eq: Vitali 2.5}
    B_Y \cap C_k \cdot B_Y^k \neq \emptyset. 
\end{equation}
We choose $k$ to be minimal with this property. Thus, $B_Y \cap C_j \cdot B_Y^j = \emptyset$ for all $1 \leq j \leq k-1$, which is precisely the condition $B_Y \in \cB_{Y,k-1}$. Consequently, by Property 2)$_{\,Y, k}$, we have $\rad B_Y \leq C \rad B_Y^k$. Recalling \eqref{eq: Vitali 2.5} and applying the triangle inequality in the form of \eqref{eq: triangle ineq} together with Property 3)$_{\ell}$, we see that
\begin{equation}\label{eq: Vitali 3}
    \lambda \cdot B_Y \subseteq (\rho/2)C_k \cdot B_Y^k \subseteq \rho C_k \cdot B_X^k,
\end{equation}
provided $\rho$ is suitably chosen. Combining \eqref{eq: Vitali 2} and \eqref{eq: Vitali 3}, we have 
\begin{equation*}
    B_X^{\ell+1} \cap \rho C_k \cdot B_X^k \neq \emptyset.
\end{equation*} 
By Property 1)$_{\,X, \ell}$, it follows that $B_X^{\ell+1} \notin \cB_{X,\ell}$, but this contradicts our choice of $B_X^{\ell+1}$. \smallskip

In view of \eqref{eq: Vitali 1}, we fix some $B_Y^{\ell+1} \in \cB_{Y,\ell}$ such that $\lambda \cdot B_Y^{\ell+1}$ has non-trivial intersection with $B_X^{\ell+1}$ with maximal possible radius. It follows that 
\begin{equation}\label{eq: Vitali 4.5}
    \rad B_Y^{\ell+1} \leq \rad B_Y^{\ell+1,*} \leq \rad B_X^{\ell+1,*} = \rad B_X^{\ell+1},
\end{equation} 
whilst
\begin{equation*}
    \mathcal{L}^d\big(B_X^{\ell+1}\big) \leq \sum_{\substack{B_Y \in \cB_{Y,\ell} \\ \lambda \cdot B_Y \cap B_X^{\ell+1} \neq \emptyset}} \mathcal{L}^d\big(\lambda \cdot B_Y\big) \leq C^d \mathcal{L}^d\big(B_Y^{\ell+1}\big),
\end{equation*}
where $\mathcal{L}^d$ denotes the $d$-dimensional Lebesgue measure. Thus, $\rad B_X^{\ell+1} \leq C B_Y^{\ell+1}$. 
Combining these observations with \eqref{eq: triangle ineq} establishes Property 3)$_{\ell+1}$. Similarly, arguing as in \eqref{eq: Vitali 4.5}, given $B_Y \in \cB_{Y,\ell}$, it follows that $\rad B_Y \leq \rad B_X^{\ell+1}$. Combining this with Property 3)$_{\ell+1}$, we have $\rad B_Y  \leq C \rad B_Y^{\ell+1}$, and so Property 2)$_{\,Y,\ell+1}$ also holds.\smallskip

It remains to construct the constant $C_{\ell+1}$ and verify 1)$_{X,\ell+1}$ and 1)$_{\,Y,\ell+1}$. For $0 \leq m \leq 2n+1$ consider the sets
\begin{align*}
    \cB_{X,\ell+1}^{(m)} &:= \cB_{X,\ell} \cap \{B_X \in \cB_X : B_X \cap C_{\ell} \rho^m \cdot B_X^{\ell+1} = \emptyset \}, \\
    \cB_{Y,\ell+1}^{(m)} &:= \cB_{Y,\ell} \cap  \{B_Y \in \cB_Y : B_Y \cap C_{\ell} \rho^m \cdot B_Y^{\ell+1} = \emptyset\},
\end{align*}
By a pigeonholing argument, there must exist some choice of $0 \leq m \leq 2n$ such that
\begin{equation}\label{eq: stability}
    \cB_{X,\ell+1}^{(m+1)}  = \cB_{X,\ell+1}^{(m)}  \quad \textrm{and} \quad \cB_{Y,\ell+1}^{(m+1)}  = \cB_{Y,\ell+1}^{(m)} .
\end{equation}
With this fixed value of $m$, define $C_{\ell+1} := C_{\ell} \rho^m$ so that $\cB_{X,\ell+1} = \cB_{X,\ell+1}^{(m)}$ and $\cB_{Y,\ell+1} = \cB_{Y,\ell+1}^{(m)}$. It immediately follows from \eqref{eq: stability} and 1)$_{X,\ell}$ and 1)$_{\,Y,\ell}$ that 1)$_{X,\ell+1}$ and 1)$_{\,Y,\ell+1}$ hold.

\medskip

The above algorithm must terminate after finitely many steps since the $\cB_{X,\ell}$ as defined in \eqref{eq: Vitali -1} form nested sequence of subsets of the finite set $\cB_X$ of strictly decreasing cardinality. Indeed, note that $B_X^{\ell+1}$ is chosen from $\cB_{X,\ell}$ in the above algorithm, so that $B_X^{\ell+1} \in \cB_{X,\ell}$ whilst clearly $B_X^{\ell+1} \notin \cB_{X, \ell+1}$. Suppose the algorithm terminates after the $L$th step. We show that the resulting families $\cB_X' := \{B_X^1, \dots, B_X^L\}$ and $\cB_Y' := \{B_Y^1, \dots, B_Y^L\}$ satisfy the desired properties 2) and 3) from the statement of the lemma.  \smallskip

First we note that, provided $R \geq \bar{C}$, Property 3) immediately follows from Property 3)$_L$ of the algorithm.\smallskip

It remains to show Property 2). By the definition of the stopping condition, we know either $\cB_{X,L} = \emptyset$ or $\cB_{Y, L} = \emptyset$. By symmetry we may assume that $\cB_{X,L} = \emptyset$. Using the standard Vitali covering argument, Property 2)$_{X,L}$ implies that 
\begin{equation}\label{eq: strong Vitali}
    \textrm{for all $B_X \in \cB_X$ there exists some $1 \leq \ell \leq L$ such that $B_X \subseteq (R/8) \cdot B_X^{\ell}$,}
\end{equation}
 provided $R \geq 1$ is chosen sufficiently large depending only on $n$ and $\lambda$. This is a slightly stronger version of the first inclusion in Property 2) of the lemma. We turn to the second inclusion. If $B_Y \in \cB_Y \setminus \cB_{Y,L}$, then we may argue as above to show that $\cB_Y \subseteq (R/8) \cdot B_Y^{\ell}$ for some $1 \leq \ell \leq L$. Thus, it suffices to consider the case $B_Y \in \cB_{Y,L}$. By \eqref{eq: Vitali lem} and \eqref{eq: strong Vitali}, we know $B_Y \cap (R/4) \cdot B_X^{\ell} \neq \emptyset$ for some $1 \leq \ell \leq L$. On the other hand, Property 2)$_{Y,L}$ of the algorithm implies $\rad B_Y \leq C \rad B_Y^{\ell}$ for all $1 \leq \ell \leq L$.  Thus, \eqref{eq: triangle ineq} and Property 3)$_L$ give us
\begin{equation*}
B_Y \subseteq \bigcup_{\ell = 1}^L (R/2) \cdot B_X^{\ell} \subseteq \bigcup_{\ell = 1}^L R \cdot B_Y^{\ell},
\end{equation*}
again provided $R$ is chosen sufficiently large. 
\end{proof}




\bibliography{Reference}{}

\begin{thebibliography}{10}

\bibitem{BBH}
Kirsti~D. Biggs, Julia Brandes, and Kevin Hughes.
\newblock Reinforcing a {P}hilosophy: {A} counting approach to square functions
  over local fields.
\newblock Preprint: arXiv:2201.09649.

\bibitem{BDG2016}
Jean Bourgain, Ciprian Demeter, and Larry Guth.
\newblock Proof of the main conjecture in {V}inogradov's mean value theorem for
  degrees higher than three.
\newblock {\em Ann. of Math. (2)}, 184(2):633--682, 2016.

\bibitem{Cassels}
J.~W.~S. Cassels.
\newblock {\em Local fields}, volume~3 of {\em London Mathematical Society
  Student Texts}.
\newblock Cambridge University Press, Cambridge, 1986.

\bibitem{Chalk1990}
J.~H.~H. Chalk.
\newblock A {$p$}-adic approach to solutions of a polynomial congruence modulo
  {$p^\alpha$}.
\newblock {\em Mathematika}, 37(2):209--216, 1990.

\bibitem{Fefferman1973}
Charles Fefferman.
\newblock A note on spherical summation multipliers.
\newblock {\em Israel J. Math.}, 15:44--52, 1973.

\bibitem{GGPRY2021}
Philip~T. Gressman, Shaoming Guo, Lillian~B. Pierce, Joris Roos, and Po-Lam
  Yung.
\newblock Reversing a philosophy: from counting to square functions and
  decoupling.
\newblock {\em J. Geom. Anal.}, 31(7):7075--7095, 2021.

\bibitem{HW2018}
Jonathan Hickman and James Wright.
\newblock The {F}ourier restriction and {K}akeya problems over rings of
  integers modulo {$N$}.
\newblock {\em Discrete Anal.}, pages Paper No. 11, 54, 2018.

\bibitem{KW2012}
Michael~W. Kowalski and James Wright.
\newblock Elementary inequalities involving the roots of a polynomial with
  applications in harmonic analysis and number theory.
\newblock {\em J. Lond. Math. Soc. (2)}, 86(3):835--851, 2012.

\bibitem{MT2004}
Gerd Mockenhaupt and Terence Tao.
\newblock Restriction and {K}akeya phenomena for finite fields.
\newblock {\em Duke Math. J.}, 121(1):35--74, 2004.

\bibitem{PSS1999}
D.~H. Phong, E.~M. Stein, and J.~A. Sturm.
\newblock On the growth and stability of real-analytic functions.
\newblock {\em Amer. J. Math.}, 121(3):519--554, 1999.

\bibitem{Prestini1984b}
Elena Prestini.
\newblock Multipliers with singularities along a curve in {${\bf R}^{n}$}.
\newblock {\em Monatsh. Math.}, 97(3):213--217, 1984.

\bibitem{Prestini1984a}
Elena Prestini.
\newblock Operators of {B}ochner-{R}iesz type for the helix.
\newblock {\em Studia Math.}, 79(1):7--16, 1984.

\bibitem{RdF1983}
Jos\'{e}~L. Rubio~de Francia.
\newblock Estimates for some square functions of {L}ittlewood-{P}aley type.
\newblock {\em Publ. Sec. Mat. Univ. Aut\`onoma Barcelona}, 27(2):81--108,
  1983.

\bibitem{Stewart1991}
C.~L. Stewart.
\newblock On the number of solutions of polynomial congruences and {T}hue
  equations.
\newblock {\em J. Amer. Math. Soc.}, 4(4):793--835, 1991.

\bibitem{Taibleson}
M.~H. Taibleson.
\newblock {\em Fourier analysis on local fields}.
\newblock Princeton University Press, Princeton, N.J.; University of Tokyo
  Press, Tokyo, 1975.

\bibitem{Wright2011}
James Wright.
\newblock From oscillatory integrals to complete exponential sums.
\newblock {\em Math. Res. Lett.}, 18(2):231--250, 2011.

\bibitem{Wright2020}
James Wright.
\newblock On a conjecture of {I}gusa in two dimensions.
\newblock {\em Amer. J. Math.}, 142(4):1193--1238, 2020.

\end{thebibliography}
\bibliographystyle{plain}

\end{document}